\documentclass[10pt,final]{amsart}
\usepackage[active]{srcltx}
\usepackage[latin1]{inputenc}
\usepackage{amsmath,bbm}
\usepackage{amsfonts}
\usepackage{pgf,tikz}
\usetikzlibrary{arrows}
\usepackage{amssymb}
\usepackage[colorlinks=true,urlcolor=blue,
citecolor=red,linkcolor=blue,linktocpage,pdfpagelabels,bookmarksnumbered,bookmarksopen]
{hyperref}
\usepackage{pgf,tikz}
\usetikzlibrary{arrows}
\pagestyle{empty}
\definecolor{uququq}{rgb}{0.25,0.25,0.25}
\definecolor{uququq}{rgb}{0.25,0.25,0.25}
\definecolor{ffqqww}{rgb}{1,0,0.4}
\definecolor{ffqqqq}{rgb}{1,0,0}
\definecolor{ttffqq}{rgb}{0.2,1,0}
\definecolor{ffffqq}{rgb}{1,1,0}
\usepackage{graphicx, tikz}
\usepackage{pgf,tikz}
\usetikzlibrary{arrows}
\usepackage[active]{srcltx}
\usepackage[latin1]{inputenc}
\usepackage{amsmath,bbm}
\usepackage{amsfonts}
\usepackage{amsthm}
\usepackage{amssymb}
\usepackage{amsfonts}
\usepackage{hyperref}
\usepackage{hyperref}% http://ctan.org/pkg/hyperref
\setcounter{tocdepth}{0}% Allow only \chapter in ToC
\usepackage{amsmath,accents,eqnarray}
\usepackage[english]{babel}
\usepackage{marginnote}
\usepackage[left=2.95cm,right=2.95cm,top=2.8cm,bottom=2.8cm]{geometry}
\numberwithin{equation}{section}
\usepackage{mathrsfs}
\usepackage{color}
\usepackage{amsmath, bbm}
\usepackage{amsfonts}
\usepackage{amssymb}
\usepackage{graphicx}%
\usepackage[active]{srcltx}
\usepackage[latin1]{inputenc}
\usepackage{amsmath,bbm}
\usepackage{amsfonts}
\usepackage{pgf,tikz}
\usetikzlibrary{arrows}
\usepackage{amssymb}
\newtheorem{theorem}{Theorem}[section]
\newtheorem{proposition}[theorem]{Proposition}

\newtheorem{corollary}[theorem]{Corollary}

\numberwithin{equation}{section}
\usepackage{pgf,tikz}
\usetikzlibrary{arrows}
\pagestyle{empty}
\definecolor{fftttt}{rgb}{1,0.2,0.2}
\numberwithin{equation}{section}
\usepackage{mathrsfs}

\DeclareMathOperator{\spt}{spt}

\usepackage{pgf,tikz}
\usetikzlibrary{arrows}
\pagestyle{empty}
\definecolor{ffqqww}{rgb}{1,0,0.4}
\definecolor{ffqqww}{rgb}{1,0,0.4}
\definecolor{xdxdff}{rgb}{0.49,0.49,1}
\begin{document}
\title[Summability of transport densities]
{Lack of regularity of the transport density\\ in the Monge problem}
\author[S. Dweik]{Samer Dweik}
\address{Laboratoire de Math\'ematiques d'Orsay, Univ. Paris-Sud, CNRS, Universit\'e Paris-Saclay, 91405 Orsay Cedex, France}
\email{samer.dweik@math.u-psud.fr}
\maketitle
\begin{abstract}
In this paper, we provide a family of counter-examples to the regularity of the transport density in the classical
Monge-Kantorovich problem. We prove that the $W^{1,p}$ regularity of the source and target measures $f^\pm$ does not imply that the transport density $\sigma$ is $W^{1,p}$, that the $BV$ regularity of $f^\pm$ does not imply that $\sigma$ is $BV$ and that $f^\pm \in C^\infty$ does not imply that $\sigma$ is $W^{1,p}$, for large $p$.
% Our construction is based
%on the counter-examples of \cite{Colombo,Wang} to the regularity of the monotone optimal map between two regular densities. 
% It is well known that the transport density is in $L^p(\Omega)$, as soon as the two densities $f^\pm$ are in $L^p(\Omega)$. However, we will prove that this is not the case for a higher regularity, i.e we want to give a counter-example, where the two densities $f^\pm \in W^{1,p}(\Omega)$, but the transport density $\sigma$ between them is not in $W^{1,p}(\Omega)$.
\end{abstract}
\section{Introduction}
The mass transport problem
%have been widely considered in the literature recently. This is due not only to its relevance for applications but also for the novelty of the methods needed for its solution. The origin of such problems 
dates back to a work from 1781 by Gaspard Monge, {\it M\'emoire sur la th\'eorie des d\'eblais et des remblais} (\cite{Monge}), where he formulated a natural question in economics which deals with the optimal way of moving points from one mass distribution to another so that the total work done is minimized. In his work, the cost of moving one unit of mass from $x$ to $y$ is measured with the Euclidean distance $|x-y|$, even though many other cost functions have been studied later on. \\ 

In order to explain this problem in full details, let us consider $f^{\pm}$ two given finite positive Borel measures in $\,\Omega\,$ satisfying the mass balance condition $f^+(\Omega)=f^-(\Omega),$ where $\Omega$ is a compact convex set in $\mathbb{R}^d$. Let $|.|$ stand for the Euclidean norm in $\mathbb{R}^d$. Then, the classical Monge optimal transportation problem (\cite{Monge}) consists in finding a transport map $T:\,\Omega \mapsto \Omega$ minimizing the functional \\
$$ T \mapsto \int_{\Omega}|x-T(x)|\,\mathrm{d}f^+,$$ \\
%\;T_\#f^+=f^-\right\}\qquad(\mbox{MP})$$
among all Borel measurable maps $\,T:\,\Omega \mapsto \Omega$ which satisfy the ``push-forward" condition
$T_\#f^+=f^-$, i.e 
 $$ f^-(A)= f^+(T^{-1}(A))\;\mbox{for every Borel set}\; A \subset \Omega.$$ \\
 The existence of optimal maps was addressed by many authors \cite{1},\,\cite{CafFelMcC},\,\cite{Gangbo},\,\cite{Sudakov}\,\,and\,\,\cite{wang} (see \cite{Champion} for the most general result, which is valid for arbitrary norms $||x-y||$; however in this paper we will concentrate only on the Euclidean case). Although this problem may have no solutions, its relaxed setting (which is the Kantorovich problem \cite{Kanto}) always has one. The relaxed problem consists in finding a Borel measure $\lambda$ over $\Omega \times \Omega$ (called $\textit{optimal transport plan}$) satisfying $\pi_{\#}^{\pm} \lambda = f^{\pm}$, where $\pi^{\pm}:\,\Omega\times \Omega \mapsto \Omega$ being the projections on the first and second factor, respectively (i.e $\pi^{\pm}(x^+,x^-):=x^{\pm}$), which minimizes the functional \\
 $$ \lambda \mapsto \int_{\Omega \times \Omega }|x-y|\,\mathrm{d}\lambda$$ \\
 among all Borel measures $\lambda$ on $\Omega \times \Omega\,$ satisfying $\pi_{\#}^{\pm} \lambda = f^{\pm}$. For the details about Optimal Transport theory, its history, and the main results, we refer to \cite{8} and \cite{11}.
  It is also possible to prove that the maximization of the functional
$$u \mapsto \int_{\Omega}u\,\mathrm{d}(f^+-f^-),$$\\
among all the 1-Lipschitz functions $u$ on $\Omega$,
is the dual to the Kantorovich problem (a maximizer for this problem is called {\it Kantorovich potential}). %This duality can be obtained from (KP) by a suitable inf-sup exchange procedure, its value equals $\min$(KP), and for every admissible $\gamma$ in (KP) and every admissible $u$ in (DP) we have
%$$\int_{\Omega \times \Omega}|x-y|\mathrm{d}\gamma\geq \int_{\Omega \times \Omega}(u(x)-u(y))\mathrm{d}\gamma=\int_{\Omega}u(x)\mathrm{d}f^+(x)-
%\int_{\Omega}u(y)\mathrm{d}f^-(y)=\int_{\Omega}u\mathrm{d}(f^+-f^-).$$
This duality implies that optimal $\lambda$ and $u$ satisfy $u(x)-u(y)=|x-y|$ on the support of $\lambda$, but also that, whenever we find some admissible $\lambda$ and $u$\, satisfying $\,\int_{\Omega \times \Omega}|x-y|\mathrm{d}\lambda=\int_{\Omega}u\mathrm{d}(f^+-f^-)$, they are both optimal (a maximal segment $[x,y]$ such that $u(x)-u(y)=|x-y|$ is called {\it{transport ray}}).
%: it can be obtained from (KP) by a suitable inf-sup exchange procedure, its value equals $\min$(KP), and for every admissible $\gamma$ in (KP) and every admissible $u$ in (DP) we have
  In such a theory it is classical to associate with any optimal transport plan $\lambda$ a positive measure $\sigma$ on $\Omega$, called $\textit{transport density}$, which represents the amount of transport taking place in each region of $\Omega$. This measure $\sigma$ is defined by 
\begin{equation} \label{definition de la density}
<\sigma,\varphi>=\int_{\Omega \times \Omega}\mathrm{d}\lambda(x,y)\int_{0}^{1}\varphi((1-t)x+ty)|x-y|\,\mathrm{d}t\;\;\;\mbox{for all}\;\;\varphi \in C(\Omega).
\end{equation}
%This pair $(\sigma,u)$ also models (in a statical or dynamical framework) the configuration of stable or growing sandpiles, where $u$ gives the pile shape and $\sigma$ stands for sliding layer (see \cite{}).
% Recall that, $v:=-\sigma \nabla u$ solves $\min\{\int |v(x)|dx\,:\,\nabla\cdot v=f^+-f^-\}$, as soon as $f^+ \ll \mathcal{L}^d$ or $f^- \ll \mathcal{L}^d$, and 
It is well known that $(\sigma,u)$ solves a particular PDE system, called Monge-Kantorovich system:
% first introduced in \cite{1}, has many strictly convex variants, dating back to \cite{1} itself and later studied in \cite{BraCarSan}, modeling traffic congestion. The minimization of the $L^1$ norm under divergence constraints also has applications in image processing, as in \cite{LelLorSchVal,BriBurGra}, in particular because the $L^1$ norm (and not its strictly convex variants) induces sparsity.
%In the framework of both traffic congestion and membrane reinforcement, in \cite{ButOudVel} the authors use a variant of this problem, already present in  \cite{BouButJEMS} and \cite{ButOudSte}, where the density $f$ has not zero average, but the Monge-Kantorovich system is complemented with Dirichlet boundary condition. In optimal transport terms, this corresponds to the possibility of sending some mass to the boundary. The easiest version of the system becomes 
\begin{equation}\label{MKsyst2}
\begin{cases}
-\nabla\cdot (\sigma\nabla u)=f^+ -f^- &\mbox{ in }\Omega\\
\sigma \nabla u \cdot n=0 &\mbox{on }\partial\Omega,\\
|\nabla u|\leq 1&\mbox{ in }\Omega,\\
|\nabla u|= 1&\sigma-\mbox{a.e. }\end{cases}
\end{equation}
%and corresponds to an optimal transport problem between $f^+$ and an unknown measure $f^-$, supported on $\partial\Omega$. By optimality, $f^-$ can be proven to be equal to the image of $f^+$ through the projection onto the boundary $\partial\Omega$.
%In this paper we are mainly concerned with summability estimates of $\sigma$ in terms of the corresponding summability properties of $f^+$. It is well known, from the works of Feldman-McCann, that $\sigma$ is unique and $L^1$ (i.e., absolutely continuous) as soon as one of the two measures $f^+$ or $f^-$ is absolutely continuous. Then, \cite{6,7,San09} analyzed $L^p$ summability: for $p<d/(d-1)$ ($d$ being the dimension of the ambient space) $\sigma$ is $L^p$ as soon as  one of the two measures $f^+$ or $f^-$ is $L^p$, while for $p\geq d':=d/(d-1)$ (including $p=\infty$) this requires that both are $L^p$. In general, it is not difficult to see that $\sigma$ is never more regular than $f$, and the higher regularity question (i.e. continuity, $C^{0,\alpha}$, $W^{1,p}$\dots) is an open question, matter of current research.
%where $\,\omega_{x,y}$ is a curve parametrizing the straight line segment connecting $x$ to $y$. 
%As we have already said that 
The $L^p$ regularity of the transport density $\sigma$ is proved successively by many authors (see, for instance, \cite{55,66,77,33,9}). In particular, we have the following
%We recall some properties of $\sigma$
% Notice in particular that one can write 
%\begin{equation*}
%\sigma(A)=\int_{\Omega \times \Omega} \mathcal{H}^1(A \cap [x,y])\mathrm{d}\lambda(x,y)\;\;\mbox{for every Borel set}\;A
%\end{equation*}
%where $\mathcal{H}^1$ stands for the 1-dimensional Hausdorff measure. This means that $\sigma(A)$ stands for ``how much'' the transport takes place in $A$, if particles move from their origin $x$ to their destination $y$ on straight lines. We recall some properties of $\sigma$

\begin{proposition}\label{prop transp dens}
 Suppose $f^+\ll\mathcal{L}^d$ or $f^-\ll\mathcal{L}^d$. Then, the transport density $\sigma$ is unique (i.e does not depend on the choice of the optimal transport plan $\lambda$) and $\sigma \ll \mathcal{L}^d$.
Moreover, if both $ f^+, f^- \in L^p(\Omega)$, then $\sigma$ also belongs to $L^p(\Omega)$.\end{proposition}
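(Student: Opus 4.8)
The plan is to reduce the $d$-dimensional problem to a one-parameter family of one-dimensional transport problems carried along the transport rays of a Kantorovich potential, and then to control the Jacobian of the associated ray map. Without loss of generality I assume $f^+\ll\mathcal{L}^d$; the case $f^-\ll\mathcal{L}^d$ is symmetric, since exchanging the roles of $f^+$ and $f^-$ corresponds to the change of variable $t\mapsto 1-t$ in \eqref{definition de la density}. It is convenient to first rewrite the transport density as a superposition of interpolated measures: setting $\pi_t(x,y)=(1-t)x+ty$ and $\mu_t=(\pi_t)_\#\big(|x-y|\,\lambda\big)$, formula \eqref{definition de la density} reads $\sigma=\int_0^1\mu_t\,\dd t$, so that it suffices to analyse the individual slices $\mu_t$.

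First I would fix an optimal plan $\lambda$ together with a Kantorovich potential $u$ and use the duality identity $u(x)-u(y)=|x-y|$ on $\spt\lambda$ to decompose $\Omega$, up to an $\mathcal{L}^d$-negligible set, into the \emph{transport rays} of $u$: the maximal segments on which $u$ is affine with unit slope. The two structural facts I would establish are that the relative interiors of distinct rays are disjoint --- so that a ``ray map'' $\Phi(z,s)$, sending a transversal label $z$ and an arclength $s$ to the corresponding point of $\Omega$, is well defined and injective in the interior --- and that $\lambda$ disintegrates over the rays into one-dimensional plans between the conditional measures of $f^+$ and $f^-$. Since optimal transport on a line is realized by the \emph{monotone} coupling, each such one-dimensional plan is uniquely determined by its two marginals; consequently the restriction of $\mu_t$ to each ray, and hence $\sigma=\int_0^1\mu_t\,\dd t$ itself, does not depend on the choice of $\lambda$. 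This yields the \emph{uniqueness} assertion.

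For the absolute continuity and the $L^p$ bound --- which I expect to be the real obstacle --- everything hinges on the Jacobian $J(z,s)$ of $\Phi$, through which $\mathcal{L}^d$ disintegrates as $J(z,s)\,\dd s\,\dd\nu(z)$ for a suitable transversal measure $\nu$. Writing $m^\pm(z,s)$ for the cumulative one-dimensional mass of $f^\pm$ along the ray $z$ up to arclength $s$, the monotone one-dimensional transport gives the explicit formula $\sigma(z,s)=|m^+(z,s)-m^-(z,s)|/J(z,s)$ for the density of $\sigma$. Since $J>0$ in the interior of each ray and $m^+-m^-$ is absolutely continuous in $s$, this already exhibits $\sigma$ as absolutely continuous on the union of the open rays; the hypothesis $f^+\ll\mathcal{L}^d$ is used precisely to guarantee that no mass concentrates on the negligible set of ray endpoints, where $\Phi$ degenerates, which closes the proof that $\sigma\ll\mathcal{L}^d$. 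The key quantitative ingredient is then the Feldman--McCann geometric lemma: because the level sets of $u$ advance at unit normal speed along the rays, their transversal contraction satisfies a Riccati differential inequality forcing $s\mapsto J(z,s)^{1/(d-1)}$ to be concave. Concavity furnishes a comparison between the Jacobian at an interior point and at the ends of the ray, which, inserted into $\|\sigma\|_{L^p}^p=\int\!\!\int |m^+-m^-|^p\,J^{1-p}\,\dd s\,\dd\nu$ together with a one-dimensional Hardy-type inequality, bounds $\|\sigma\|_{L^p}$ by a finite multiple of $\|f^+\|_{L^p}+\|f^-\|_{L^p}$. The main technical difficulty, and where I would spend most care, is the rigorous treatment of the ray decomposition and of this Jacobian estimate near the endpoints of the rays, where $\Phi$ is singular; I would handle it by truncating the rays away from their endpoints, proving the estimate on the truncation with constants independent of the cut-off, and passing to the limit.
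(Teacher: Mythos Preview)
The paper does not supply a proof of this proposition: it is quoted as a known result, with references to De~Pascale--Evans--Pratelli, De~Pascale--Pratelli, Feldman--McCann, and Santambrogio (items \cite{55,66,77,33,9} in the bibliography). So there is no ``paper's own proof'' to compare against; your outline is essentially a synthesis of those cited works --- the ray decomposition and the concavity of $s\mapsto J(z,s)^{1/(d-1)}$ are the Feldman--McCann ingredients, while the interpolation $\sigma=\int_0^1\mu_t\,\dd t$ and the passage to $L^p$ bounds is the approach of Santambrogio.

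One point deserves correction. Your uniqueness argument asserts that ``optimal transport on a line is realized by the monotone coupling, each such one-dimensional plan is uniquely determined by its two marginals''. This is false for the cost $|x-y|$: already for $f^+=\mathbf 1_{[0,2]}$, $f^-=\mathbf 1_{[1,3]}$ the monotone map $x\mapsto x+1$ and the ``book-shifting'' map (identity on $[1,2]$, $x\mapsto x+2$ on $[0,1]$) are both optimal. What \emph{is} true, and what you actually need, is that the one-dimensional \emph{transport density} along the ray is unique --- it equals the cumulative difference $|\int^s(f^+-f^-)|$ regardless of which optimal coupling realizes it. The uniqueness proofs in the cited literature do not go through uniqueness of the one-dimensional plan; they argue either via this flux formula or, as in Feldman--McCann, by showing that any two optimal $\sigma$'s are mutually absolutely continuous and then exploiting convexity. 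You should replace that step accordingly.

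A second, smaller caveat: the fact that the set of ray endpoints is $\mathcal L^d$-negligible, which you invoke to conclude $\sigma\ll\mathcal L^d$, is itself a nontrivial lemma (proved e.g.\ in \cite{1,33}) and should be cited rather than assumed.
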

%These properties are well-known in the literature, and we refer to \cite{55}, \cite{66},  \cite{77}, \cite{33} and \cite{9}.  \\ 

The higher order regularity of the transport density $\sigma$ is still widely open; the only known results are in $\mathbb{R}^2$\,: if $f^{\pm}$ are two positive densities, continuous and have compact, disjoint, convex support, then the ``monotone optimal transport map" $T$ is continuous except on a negligible set (the endpoints of transport rays) and the transport density $\sigma$ is actually continuous everywhere (\cite{Pratelli}). Moreover, in \cite{Wang2016}, the authors prove the continuity of the same map $T$ under the assumptions that $f^{\pm}$ are two positive densities, continuous with $\spt(f^+) \subset \spt(f^-)$ and one of the sets $\{f^+>f^-\}$, $\{f^->f^+\}$ is convex (it will be that the transport density $\sigma$ is also continuous in this case). Other results exist as far as the regularity in some directions is concerned: in \cite{Gangbo}, it has been proven that when $f^{\pm}$ are Lipschitz continuous with disjoint supports (and with some extra technical condition on the supports), then the transport density is locally Lipschitz continuous ``along transport rays". Also in \cite{Buttazo}, the authors have a more general result for the case of just summable $f^{\pm}$ without any extra conditions on supports; they prove that if $f^{\pm} \in L^p(\Omega)$, then for a.e $x \in \Omega$, the transport density $\sigma \in W_{loc}^{1,p}(R_x)$, where $R_x$ is the transport ray passing through $x$. As one can see, the $\,W^{1,p}(\Omega)\,(C^{0,\alpha}(\Omega),BV(\Omega),...)$ regularity of the transport density $\sigma$ is an interesting question, and the aim of this paper is to give a (negative) answer to it!
\\
 
  In this paper we focus on examples relating the regularity of the initial data $f^\pm$ with the regularity of the transport density $\sigma$. As  a  starting  point,  the  following  example  shows  that  in
general, the transport density $\sigma$ is not more regular than  the initial data\,: consider $\chi^+:=[0,1]^2,\,\chi^-:=[2,3] \times [0,1]$ and set $f^+(x_1,x_2):=f_1(x_1)f_2(x_2),$ where we suppose that $f^+$ is concentrated on $\chi^+$, and take $f^-(x_1,x_2):=f^+(x_1 - 2,x_2)$, for every $(x_1,x_2) \in \chi^-$. In this case, it is easy to compute the transport density $\sigma$ between $f^\pm$, so we get $\sigma(x_1,x_2)=(\int_0^{x_1} f_1(t)\,\mathrm{d}t)f_2(x_2)$ for every $x \in \chi^+$. Hence, the transport density $\sigma$ has the same regularity as $f^{\pm}$ in the $x_2$-variable. Yet, we will give examples where the regularity of the transport density $\sigma$ is worse than the regularity of the initial data $f^{\pm}$. In particular, we will prove among others the following statements:
\begin{eqnarray} \label{Results}
f^{\pm} \in W^{1,p}(\Omega)\; &\not\Rightarrow &\;\sigma \in W^{1,p}(\Omega),\;\forall\,\;p>1, \\
f^{\pm} \in BV(\Omega)\; &\not\Rightarrow & \;\sigma \in BV(\Omega),\\
% f^{\pm} \in C^{1}(\bar{\Omega})\; &\not\Rightarrow &\;\sigma \in H^1(\Omega), \\
 %f^{\pm} \in W^{1,\infty}(\Omega)\; &\not\Rightarrow &\;\sigma \in H^1_{loc}(\Omega),\\
%f^{\pm} \in C^{1,\alpha}(\Omega)\cap W^{2,\frac{1}{1-\alpha}-\varepsilon}(\Omega)\; &\not\Rightarrow &\;\sigma \in W^{1,2+\alpha}_{loc}(\Omega),\;\;\forall\;\,\alpha \in (0,1),\,\varepsilon>0, \\
%f^{\pm} \in C^{1,\alpha}(\bar{\Omega})\; &\not\Rightarrow &\;\sigma \in W_{loc}^{1,2+\alpha}(\Omega),\;\;\forall\;\,\alpha \in [0,1], \\
%f^{\pm} \in C^{2,1}(\Omega)\; &\not\Rightarrow &\;\sigma \in W_{loc}^{1,3}(\Omega), \\
f^{\pm} \in C^{\infty}(\bar{\Omega})\; &\not\Rightarrow &\;\sigma \in W^{1,3}(\Omega).
%f^{\pm} \in C^{\infty}(\bar{\Omega})\; &\not\Rightarrow & \;\sigma \in W_{loc}^{1,5}(\Omega),\\
%f^{\pm} \in C^{0,\alpha}(\Omega)\; &\not\Rightarrow &\;\sigma \in C_{loc}^{0,\frac{\alpha}{\alpha+2}+\varepsilon}(\Omega),\;\;\forall\;\,\varepsilon>0,\,\alpha  \in (0,1),\\
%f^{\pm} \in C^{0,1}(\Omega)\; &\not\Rightarrow &\;\sigma \in C^{0,\frac{1}{3}+\varepsilon}_{loc}(\Omega),\;\;\forall\;\,\varepsilon>0,\\
%f^{\pm} \in C^{1}(\bar{\Omega})\; &\not\Rightarrow &\;\sigma \in C^{0,\frac{1}{3}+\varepsilon}(\Omega),\;\;\forall\;\,\varepsilon>0,\\
%f^{\pm} \in C^{1,\alpha}(\Omega)\; &\not\Rightarrow &\;\sigma \in C_{loc}^{0,\frac{1+\alpha}{3+\alpha}+\varepsilon}(\Omega),\;\;\forall\;\,\varepsilon>0,\,\alpha \in (0,1) ,\\
%f^{\pm} \in C^{2,1}(\Omega)\; &\not\Rightarrow & \;\sigma \in C_{loc}^{0,\frac{1}{2} + \varepsilon}(\Omega),\;\;\forall\,\varepsilon>0,\\
%f^{\pm} \in C^{\infty}(\bar{\Omega})\; &\not\Rightarrow & \;\sigma \in C^{0,\frac{1}{2} + \varepsilon}(\Omega),\;\;\forall\,\varepsilon>0,\\
%f^{\pm} \in C^{\infty}(\bar{\Omega})\; &\not\Rightarrow & \;\sigma \in C_{loc}^{0,\frac{2}{3} + \varepsilon}(\Omega),\;\;\forall\,\varepsilon>0.
\end{eqnarray}
\section{Main Results } \label{sec.2}
Inspired by \cite{Colombo,Wang}, we will construct a family of counter-examples by, first, choosing which lines will be transport rays.
% Firstly, let us fix the transport
%rays by specifying a family of line segments that may be thought of as the lines along which the optimal transport acts. 
Set $\gamma>0$ and consider the following transport rays:
\begin{equation} \label{rays}
 l_a:=\bigg\{(x_1,x_2) \in \mathbb{R}^2:\,x_2=\frac{a^{\gamma}}{2} \,(x_1 + a),\,x_1 \in (-a,1) \bigg\},\;\;\;\;a \in [0,1]. 
 \end{equation} 
It is clear that the segments $l_a$ do not mutually intersect. The domain representing both source and target will be $\Delta \subset \mathbb{R}^2$ (see Figure \ref{fig1}), where  
\begin{equation} \label{domain}
 \Delta:= \mbox{interior of the triangle with vertices}\; (-1,0),\,(1,0) \; \mbox{and}\;(1,1).
\end{equation}
The initial and final density will have the form\\ 
 \begin{equation} \label{data}
 f^{+}(x_1,x_2)=1,\;f^-(x_1,x_2)= 1 + \beta(\zeta^{\prime\prime}(x_1) + \eta^{\prime\prime}(x_2))\;\;\;\mbox{for all}\;\;x:=(x_1,x_2) \in \Delta,
 \end{equation} \\
 where $\,\zeta(x_1):= -x_1^2(x_1-1)^2$ (the choice of $\zeta$ is made essentially in such a way that $\zeta(1)=\zeta^\prime(1)=0$), 
 %and $\,\zeta^{\prime\prime}(0)<0$),
 % is the same function which is already determined in \cite{Colombo} and 
 \begin{figure}
%\begin{tikzpicture}[line cap=round,line join=round,>=triangle 45,x=1.0cm,y=1.0cm]
%\clip(-4.22,-2.93) rectangle (5.14,4.27);
\begin{tikzpicture}[line cap=round,line join=round,>=triangle 45,x=0.9cm,y=0.9cm]
\draw[->,color=black] (-4.04,0) -- (4.48,0);
\foreach \x in {-4,-3,-2,-1,1,2,3,4}
%\draw[shift={(\x,0)},color=black] (0pt,2pt) -- (0pt,-2pt) node[below] {\footnotesize $\x$};
\draw[->,color=black] (0,-3.08) -- (0,3.42);
\foreach \y in {-3,-2,-1,1,2,3}
%\draw[shift={(0,\y)},color=black] (2pt,0pt) -- (-2pt,0pt) node[left] {\footnotesize $\y$};
%\draw[color=black] (0pt,-10pt) node[right] {\footnotesize $0$};
\clip(-4.04,-3.08) rectangle (4.48,3.42);
\draw (2,2)-- (2,0);
\draw [color=ffqqqq] (-3,0)-- (2,2);
\draw [color=ffqqqq] (0,0)-- (2,0);
\draw [color=ffqqqq] (-1,0)-- (2,0.31);
\draw [color=ffqqqq] (-1.82,0)-- (2,0.78);
\draw [color=ffqqqq] (-2.52,0)-- (2,1.44);
\begin{scriptsize}
%\draw[color=ffqqqq] (0.16,0.27) node {$f$};
%\draw[color=ffqqqq] (-0.15,0.61) node {$g$};
%\end{scriptsize}
%\end{tikzpicture}
\draw[color=black] (0.6,0.7) node {$l_a$};
\draw[color=black] (-1.9,-0.2) node {$-a$};
\end{scriptsize}
\end{tikzpicture}
\caption{ \label{fig1}}
\end{figure}
$\eta$ is a $C^2$ function with $\eta(0)=\eta^{\prime}(0)=0$ and $\beta >0$ is chosen so that $f^-$ will be a non-negative density. Note that $\eta$ is constructed in such a way that 
 %Since we want 
 %the segments $l_a$ are all the transport rays for the optimal map between $f^+$ and $f^-$.
  the following mass balance condition for the region in the domain below each $l_a$ is satisfied:
  \begin{equation} \label{Existence of a transport map}
  \int_{\Delta_a} f^+ = \int_{\Delta_a} f^- \;\;\;\;\mbox{for all}\;a \in [0,1],
  \end{equation}
 where $\,\Delta_a$ is the subgraph of $\,l_a$ in $\,\Delta$, namely the triangle formed by $(-a,0),\,(1,0)$ and $(1,\frac{a^{\gamma}}{2}(1+a))$; 
 %this mass constraint will determine the function $\eta$ (see Lemma 3.1 in \cite{Colombo}). 
 or equivalently, (\ref{Existence of a transport map}) can be rewritten as 
 $$
 - \int_{\Delta_a} \zeta^{\prime\prime}(x_1)\,\mathrm{d}x_1\,\mathrm{d}x_2 =  \int_{\Delta_a} \eta^{\prime\prime}(x_2)\,\mathrm{d}x_1\,\mathrm{d}x_2 \;\;\;\;\mbox{for all}\;a \in [0,1].$$\\
 Yet, it is easy to see that
 \begin{eqnarray*}
- \int_{\Delta_a} \zeta^{\prime\prime}(x_1)\,\mathrm{d}x_1\,\mathrm{d}x_2 &=&
-\int_0^{\frac{a^{\gamma}}{2}(1+a)} \int_{\frac{2}{a^{\gamma}}x_2 - a}^{1} \zeta^{\prime\prime}(x_1)\,\mathrm{d}x_1\,\mathrm{d}x_2 \\ \\
 &=& \int_0^{\frac{a^{\gamma}}{2}(1+a)} \zeta^{\prime}\bigg(\frac{2}{a^{\gamma}}x_2 - a\bigg) \,\mathrm{d}x_2 \\ \\
 &=&\frac{a^{\gamma+2}}{2}(1+a)^2
 \end{eqnarray*}
and as $\,\eta(0)=\eta^{\prime}(0)=0$, we have
\begin{eqnarray*}
\int_{\Delta_a} \eta^{\prime\prime}(x_2)\,\mathrm{d}x_1\,\mathrm{d}x_2 &=&
\int_{-a}^{1} \int_0^{\frac{a^{\gamma}}{2}(x_1+a)}\eta^{\prime\prime}(x_2)\,\mathrm{d}x_2\,\mathrm{d}x_1 \\ \\
&=&\int_{-a}^{1} \eta^{\prime}\bigg(\frac{a^{\gamma}}{2}(x_1+a)\bigg)\mathrm{d}x_1 \\ \\
&=& \frac{\eta(\frac{a^\gamma}{2}(1+a))}{\frac{a^\gamma}{2}}.
\end{eqnarray*}
 Then,
 $$ \eta(s)=s^2 a^2(s),\;\;\;\forall\;s \in (0,1)$$\\
 where $a(s)$ is the unique solution of \begin{equation} \label{implicit function a}
 s=\frac{a^{\gamma}}{2}(1+a).
 \end{equation} \\
 %SECOND DERIVATIVS POUR SIMPLIFIER LES CALCULS
 Yet, by the implicit function theorem, it is easy to see that there exists a $C^{\infty}$ function $h$ defined in a neighborhood of $\,0\,$ such that
 $$ s=\frac{h(s)}{2^{\frac{1}{\gamma}}}(1+h(s))^{\frac{1}{\gamma}}.$$\\
 Hence, $h(s^{\frac{1}{\gamma}})$ is a solution to (\ref{implicit function a}) and $\eta(s)=s^2 h^2(s^{\frac{1}{\gamma}})$. After tedious computations, we can check that
 
 $$\eta^{\prime}(s)=
 2s\,h^2 (s^{\frac{1}{\gamma}})\,+\,\frac{2}{\gamma}s^{\frac{1}{\gamma}+1}\,\,h(s^{\frac{1}{\gamma}})\,h^{\prime}(s^{\frac{1}{\gamma}}),$$
 
 $$\eta^{\prime\prime}(s)=
 2\,h^2 (s^{\frac{1}{\gamma}})\,+\,\bigg(\frac{6}{\gamma}\, +\, \frac{2}{\gamma^2}\bigg)s^{\frac{1}{\gamma}}\,\,h(s^{\frac{1}{\gamma}})\,h^{\prime}(s^{\frac{1}{\gamma}})\,+\, \frac{2}{\gamma^2}\,s^{\frac{2}{\gamma}}(\,(h^{\prime}(s^{\frac{1}{\gamma}}))^2 \,+\, \,h(s^{\frac{1}{\gamma}})\,h^{\prime\prime}(s^{\frac{1}{\gamma}})),$$
 
$$ \eta^{\prime\prime\prime}(s)=
 \bigg(\frac{4}{\gamma}\,+\,\frac{6}{\gamma^2}\, +\, \frac{2}{\gamma^3}\bigg)s^{\frac{1}{\gamma}-1}\,\,h(s^{\frac{1}{\gamma}})\,h^{\prime}(s^{\frac{1}{\gamma}})\,+\, \bigg(\frac{6}{\gamma^3}+\,\frac{6}{\gamma^2}\bigg)\,s^{\frac{2}{\gamma}-1}(\,(h^{\prime}(s^{\frac{1}{\gamma}}))^2 \,+\, \,h(s^{\frac{1}{\gamma}})\,h^{\prime\prime}(s^{\frac{1}{\gamma}}))$$
 
 $$+\,\frac{6}{\gamma^3}\,s^{\frac{3}{\gamma}-1}\,\,h^\prime(s^{\frac{1}{\gamma}})\,h^{\prime\prime}(s^{\frac{1}{\gamma}})+\,\frac{2}{\gamma^3}\,s^{\frac{3}{\gamma}-1}\,\,h(s^{\frac{1}{\gamma}})\,h^{\prime\prime\prime}(s^{\frac{1}{\gamma}}),$$
 and\\
 $$\eta^{\prime\prime\prime\prime}(s)=
 \bigg(\frac{14}{\gamma^4}\,+\,\frac{12}{\gamma^3}\, -\, \frac{2}{\gamma^2}\bigg)s^{\frac{2}{\gamma}-2}\,(h^{\prime}(s^{\frac{1}{\gamma}}))^2+\,
 \bigg(\frac{2}{\gamma^4}\,+\,\frac{4}{\gamma^3}\, -\, \frac{2}{\gamma^2}-\frac{4}{\gamma}\bigg)s^{\frac{1}{\gamma}-2}\,\,h(s^{\frac{1}{\gamma}})\,h^{\prime}(s^{\frac{1}{\gamma}})\,+\, \frac{6}{\gamma^4}\,s^{\frac{4}{\gamma}-2}(h^{\prime\prime}(s^{\frac{1}{\gamma}}))^2 $$
 
 $$+\,\bigg(\frac{14}{\gamma^4}\,+\,\frac{12}{\gamma^3}\, -\, \frac{2}{\gamma^2}\bigg)s^{\frac{2}{\gamma}-2}\,\,h(s^{\frac{1}{\gamma}})\,h^{\prime\prime}(s^{\frac{1}{\gamma}})\,+\,\bigg(\frac{30}{\gamma^4}\,+\,\frac{12}{\gamma^3}\bigg)s^{\frac{3}{\gamma}-2}\,\,h^\prime(s^{\frac{1}{\gamma}})\,h^{\prime\prime}(s^{\frac{1}{\gamma}})\,+\,
 \frac{8}{\gamma^4}\,s^{\frac{4}{\gamma}-2}\,h^\prime(s^{\frac{1}{\gamma}})\,h^{\prime\prime\prime}(s^{\frac{1}{\gamma}})
 $$
 $$+\,\bigg(\frac{12}{\gamma^4}\,+\,\frac{4}{\gamma^3}\bigg)s^{\frac{3}{\gamma}-2}\,\,h(s^{\frac{1}{\gamma}})\,h^{\prime\prime\prime}(s^{\frac{1}{\gamma}})\,+\,
 \frac{2}{\gamma^4}\,s^{\frac{4}{\gamma}-2}\,h(s^{\frac{1}{\gamma}})\,h^{\prime\prime\prime\prime}(s^{\frac{1}{\gamma}}).
 $$
Hence,
%\end{eqnarray*}\\
 \begin{equation}\label{The regularity of the initial data}
\begin{cases}
\eta(|.|) \in C^{\infty}(\mathbb{R})   & \;\;\;\mbox{if}\;\;\gamma=\frac{1}{2},\\
\eta \in C^{\infty}[0,+\infty)\;\,\mbox{and}\,\;\eta(|.|)\in C^{4,1}(\mathbb{R})  &\;\;\; \mbox{if}\;\;\gamma=1,\\
\eta(|.|) \in C^{3,\frac{2}{\gamma}-1}(\mathbb{R})  & \;\;\;\mbox{if}\;\;1<\gamma < 2,\\
\eta \in C^{3}[0,+\infty) \;\,\mbox{and}\,\;\eta(|.|)\in C^{2,1}(\mathbb{R}) & \;\;\;\mbox{if}\;\;\gamma =2,\\
\eta(|.|) \in C^{2,\frac{2}{\gamma}}(\mathbb{R}) \cap W^{3,\frac{\gamma}{\gamma-2}-\varepsilon}(\mathbb{R}) &\;\;\; \mbox{if}\;\;\gamma > 2,\,\varepsilon>0.
\end{cases}
\end{equation}\\
%Now, we will introduce the key propositions in this paper, where their proofs are postponed to Section \ref{Sec.3}.
Now, we will introduce the following key propositions, whose proofs, for simplicity of exposition, are postponed to Section \ref{Sec.3}.
 \begin{proposition}\label{Prop2.1}
The transport density $\sigma$ between $f^+$ and $\,f^-$ is not in $W^{1,p}(\Delta)$ for all $p$ satisfying
$$ p  \geq \min\bigg\{\frac{\gamma}{\gamma-1},\frac{\gamma+2}{\gamma}\bigg\}.$$ 
 \end{proposition}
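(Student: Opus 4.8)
The plan is to produce a closed formula for the transport density $\sigma$ (which is unique and absolutely continuous by Proposition \ref{prop transp dens}, since $f^+\equiv 1\ll\mathcal L^d$) and then to read off the singular behaviour of $\partial_{x_2}\sigma$ near the origin, where the foliation $\{l_a\}$ degenerates. First I would note that the balance condition (\ref{Existence of a transport map}) is exactly what makes the $l_a$ genuine transport rays: differentiating $\int_{\Delta_a}(f^+-f^-)=0$ in $a$ shows that the net mass carried by each ray vanishes, so that mass is rearranged monotonically \emph{along} each $l_a$ without crossing. Parametrising $\Delta$ by $(a,x_1)$ through $x_2=\frac{a^\gamma}{2}(x_1+a)$, whose Jacobian is $J(a,x_1)=\partial_a x_2=\frac{a^{\gamma-1}}{2}\big(\gamma x_1+(\gamma+1)a\big)>0$, and integrating the Monge--Kantorovich equation $-\nabla\cdot(\sigma\nabla u)=f^+-f^-$ across the bundle of rays (equivalently, solving the one-dimensional transport problem on each ray and dividing by the transverse Jacobian), I obtain
$$\sigma(x)=\frac{|v_a|}{J(a,x_1)}\left|\int_{-a}^{x_1}(f^+-f^-)\big(\xi,\tfrac{a^\gamma}{2}(\xi+a)\big)\,J(a,\xi)\,\mathrm d\xi\right|,\qquad v_a=\big(1,\tfrac{a^\gamma}{2}\big).$$

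Next I would insert $f^+-f^-=-\beta(\zeta''+\eta'')$ and isolate the dominant singular part. The $\eta$-piece can be integrated explicitly using $\eta(0)=\eta'(0)=0$ and, combined with the factor $J^{-1}$, is subdominant near the $x_1$-axis (using $\eta(s)\sim c\,s^{2+2/\gamma}$, which follows from $h(0)=0$, $h'(0)=2^{1/\gamma}$). After integrating the $\zeta$-piece by parts one is left with a bounded term $-\beta\zeta'(x_1)$ plus $\beta N/D$, where $N=a\zeta'(-a)+\gamma\zeta(x_1)-\gamma\zeta(-a)$ and $D=\gamma x_1+(\gamma+1)a$. Since the $x_2$-dependence enters only through $a=a(x_1,x_2)$ with $\partial_{x_2}a=1/J$, differentiating and using $\zeta'(0)=0$, $\zeta''(0)\neq 0$, I expect to find that in the corner regime $a\ll x_1$ (that is $x_2\ll x_1^{\gamma+1}$, where $a\approx(2x_2/x_1)^{1/\gamma}$)
$$\partial_{x_2}\sigma(x)=c\,x_1^{-1/\gamma}\,x_2^{1/\gamma-1}\,\big(1+o(1)\big),\qquad c\neq 0,$$
with the $\eta$-terms, the normalisation $|v_a|$ and the sign of $\sigma$ all contributing only lower-order corrections.

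Finally, I would prove divergence of $\int_\Delta|\partial_{x_2}\sigma|^p$ by estimating the model $\big|x_1^{-1/\gamma}x_2^{1/\gamma-1}\big|^p$ over the corner $\{0<x_2<x_1^{\gamma+1},\ 0<x_1<\varepsilon\}$. The inner integral in $x_2$ already diverges when $p(1-1/\gamma)\ge 1$, i.e. $p\ge\frac{\gamma}{\gamma-1}$ (relevant for $\gamma>1$); when it converges it equals a constant times $x_1^{(\gamma+1)(p/\gamma-p+1)}$, after which the remaining integral in $x_1$ carries the exponent $-\gamma p+\gamma+1$, which is $\le-1$ precisely when $p\ge\frac{\gamma+2}{\gamma}$. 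Thus a single two-variable integral produces both thresholds, and the double integral diverges as soon as $p\ge\min\{\frac{\gamma}{\gamma-1},\frac{\gamma+2}{\gamma}\}$, whence $\sigma\notin W^{1,p}(\Delta)$.

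The hard part will be the two items behind the closed formula and the leading-order extraction: first, rigorously justifying that the constructed $l_a$ are the transport rays and that $\sigma$ reduces to the displayed ray integral; second, the bookkeeping needed to certify that the singular profile $x_1^{-1/\gamma}x_2^{1/\gamma-1}$ of $\partial_{x_2}\sigma$ is genuinely leading and is not cancelled by the $\eta$-contribution or by the derivatives of the sign and normalisation factors. Once that leading term is secured, the divergence of the $L^p$ norm is the elementary integral computation sketched above.
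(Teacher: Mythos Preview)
Your plan is correct and follows the same core strategy as the paper: derive a closed formula for $\sigma$ by integrating $-\nabla\cdot(\sigma\nabla u)=f$ across the ray foliation, show that $\partial_{x_2}\sigma$ blows up like $1/J$ near the origin, and then check that $\|1/J\|_{L^p}$ diverges exactly at the two thresholds. Your formula for $\sigma$ agrees with the paper's (your $|v_a|/J(a,x_1)$ is the paper's $l(a)/J(t,a)$ after the change $x_1=-a+(1+a)t$, which absorbs the extra factor $1+a$), and the ``hard parts'' you identify---justifying that the $l_a$ are transport rays, and certifying that the $\eta$-contribution does not cancel the singular profile---are exactly the two points the paper addresses at the end of Section~3 (via the irrotationality of the ray direction) and in the estimates (3.8)--(3.10).

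The organisational difference is this: the paper keeps the coordinates $(t,a)$ throughout, proves the \emph{two-sided} bound $\partial_{x_2}\sigma\approx 1/J$ on a full neighbourhood of the origin by separately estimating $\partial_t\sigma\approx 1$ and $\partial_a\sigma\approx t+(t/(t+a))^2$, and then evaluates $\int J^{1-p}\,dt\,da$ in polar coordinates, where the radial integral produces the threshold $\frac{\gamma+2}{\gamma}$ and the angular integral (singular at $\theta=0$, i.e.\ $a\ll t$) produces $\frac{\gamma}{\gamma-1}$. You instead restrict from the outset to the corner $a\ll x_1$ (equivalently $x_2\ll x_1^{\gamma+1}$), where $1/J$ takes the explicit power-law form $x_1^{-1/\gamma}x_2^{1/\gamma-1}$, and read off both thresholds from the inner/outer $(x_2,x_1)$ integrals. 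This is a legitimate shortcut for the divergence claim, since your corner is precisely the sector where the paper's angular singularity sits; just note that your $o(1)$ asymptotic must be made uniform on a region of the form $\{x_2<\epsilon\, x_1^{\gamma+1}\}$ for some fixed small $\epsilon$ (rather than $x_2/x_1^{\gamma+1}\to 0$) so that the lower bound survives integration. The paper's uniform estimate $\partial_{x_2}\sigma\approx 1/J$ is slightly stronger and is reused later for Proposition~\ref{Holderregularity} and in Section~\ref{Sec.4}, which is the main payoff of its extra bookkeeping.
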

 \begin{corollary}\label{Statements Sobolev}
 We have the following statements:
\begin{eqnarray}
 f^{\pm} \in W^{1,p}(\Delta)\; &\not\Rightarrow &\;\sigma \in W^{1, \frac{2p+\varepsilon}{p+1}}(\Delta),\;\forall\;\varepsilon>0,\;p>1, \\
 %f^{\pm} \in C^{0,1}(\Omega)\; &\not\Rightarrow &\;\sigma \in H^1_{loc}(\Omega),\\
 f^{\pm} \in C^{1}(\bar{\Delta})\; &\not\Rightarrow &\;\sigma \in H^1(\Delta), \\
f^{\pm} \in C^{1,\alpha}(\Delta)\; &\not\Rightarrow &\;\sigma \in W^{1,2+\alpha}(\Delta),\;\forall\;\alpha \in (0,1), \\
%f^{\pm} \in BV(\Omega)\; &\not\Rightarrow & \;\sigma \in BV(\Omega),\\
%f^{\pm} \in W^{1,p}(\Omega) \; &\not\Rightarrow &\;\sigma \in W_{loc}^{1,\frac{2 p+\varepsilon}{p+1}}(\Omega),\;\;\forall\;\varepsilon>0,\,p>1, \\
%f^{\pm} \in C^{1}(\bar{\Omega})\; &\not\Rightarrow &\;\sigma \in H^1(\Omega), \\
%f^{\pm} \in C^{1,\alpha}(\Omega)\; &\not\Rightarrow &\;\sigma \in W^{1,2+\alpha}_{loc}(\Omega),\;\;\forall\;\,\alpha \in (0,1), \\
%f^{\pm} \in C^{1,\alpha}(\Omega)\; &\not\Rightarrow &\;\sigma \in W_{loc}^{1,2+\alpha}(\Omega) \cup C_{loc}^{0,\frac{1+\alpha}{3+\alpha}+\varepsilon}(\Omega),\;\;\forall\;\,\alpha \in (0,1),\,\varepsilon>0, \\ 
%f^{\pm} \in C^{2,1}(\Omega)\; &\not\Rightarrow &\;\sigma \in W_{loc}^{1,3}(\Omega), \\
%f^{\pm} \in C^{\infty}(\bar{\Omega})\; &\not\Rightarrow &\;\sigma \in W^{1,3}(\Omega), \\
%f^{\pm} \in C^{\infty}(\bar{\Omega})\; &\not\Rightarrow & \;\sigma \in W_{loc}^{1,5}(\Omega),\\
f^{\pm} \in C^{\infty}(\bar{\Delta})\; &\not\Rightarrow &\;\sigma \in W^{1,3}(\Delta).
%f^{\pm} \in C^{0,\alpha}(\Omega)\; &\not\Rightarrow &\;\sigma \in C_{loc}^{0,\frac{\alpha}{\alpha+2}+\varepsilon}(\Omega),\;\;\forall\;\,\varepsilon>0,\,\alpha  \in (0,1),\\
%f^{\pm} \in C^{0,1}(\Omega)\; &\not\Rightarrow &\;\sigma \in C^{0,\frac{1}{3}+\varepsilon}_{loc}(\Omega),\;\;\forall\;\,\varepsilon>0,\\
%f^{\pm} \in C^{1}(\bar{\Omega})\; &\not\Rightarrow &\;\sigma \in C^{0,\frac{1}{3}+\varepsilon}(\Omega),\;\;\forall\;\,\varepsilon>0,\\
%f^{\pm} \in C^{1,\alpha}(\Omega)\; &\not\Rightarrow &\;\sigma \in C_{loc}^{0,\frac{1+\alpha}{3+\alpha}+\varepsilon}(\Omega),\;\;\forall\;\,\varepsilon>0,\,\alpha \in (0,1) ,\\
%f^{\pm} \in C^{2,1}(\Omega)\; &\not\Rightarrow & \;\sigma \in C_{loc}^{0,\frac{1}{2} + \varepsilon}(\Omega),\;\;\forall\,\varepsilon>0,\\
%f^{\pm} \in C^{\infty}(\bar{\Omega})\; &\not\Rightarrow & \;\sigma \in C^{0,\frac{1}{2} + \varepsilon}(\Omega),\;\;\forall\,\varepsilon>0,\\
%f^{\pm} \in C^{\infty}(\bar{\Omega})\; &\not\Rightarrow & \;\sigma \in C_{loc}^{0,\frac{2}{3} + \varepsilon}(\Omega),\;\;\forall\,\varepsilon>0.
%\end{eqnarray}
%\begin{eqnarray} \label{Results}
%f^{\pm} \in W^{1,p}(\Omega)\; &\not\Rightarrow &\;\sigma \in W_{loc}^{1,\frac{2 p+\varepsilon}{p+1}}(\Omega),\;\;\forall\;\varepsilon>0,\,p>1, \\
%f^{\pm} \in C^{1,\alpha}(\bar{\Omega})\; &\not\Rightarrow &\;\sigma \in W_{loc}^{1,2+\alpha}(\Omega),\;\;\forall\;\,\alpha \in [0,1], \\
%f^{\pm} \in C^{2,1}(\bar{\Omega})\; &\not\Rightarrow &\;\sigma \in W_{loc}^{1,3}(\Omega), \\
%f^{\pm} \in C^{\infty}(\bar{\Omega})\; &\not\Rightarrow & \;\sigma \in W_{loc}^{1,5}(\Omega).
\end{eqnarray}
\end{corollary}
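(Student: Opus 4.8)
The four non-implications will all be deduced from Proposition \ref{Prop2.1} together with the regularity table \eqref{The regularity of the initial data}, by tuning the single parameter $\gamma$. The point is that the regularity of the data is entirely governed by $\eta$: since $f^+\equiv 1$ and $f^-=1+\beta(\zeta''+\eta'')$ with $\zeta$ a polynomial, and since $\eta''$ depends only on $x_2\ge 0$ on $\Delta$, one has $f^\pm\in C^k(\bar\Delta)\iff \eta\in C^{k+2}[0,\infty)$, $f^\pm\in C^{k,\alpha}\iff\eta\in C^{k+2,\alpha}$, and $f^\pm\in W^{1,p}(\Delta)\iff\eta\in W^{3,p}$. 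On the other hand, writing $p^*(\gamma):=\min\{\tfrac{\gamma}{\gamma-1},\tfrac{\gamma+2}{\gamma}\}$, Proposition \ref{Prop2.1} says $\sigma\notin W^{1,p}(\Delta)$ whenever $p\ge p^*(\gamma)$; an elementary comparison shows $p^*(\gamma)=\tfrac{\gamma+2}{\gamma}$ for $1\le\gamma\le 2$ and $p^*(\gamma)=\tfrac{\gamma}{\gamma-1}$ for $\gamma\ge 2$, the two branches crossing at $\gamma=2$, where $p^*=2$. Each claim is then obtained by choosing $\gamma$ so that the data has exactly the prescribed regularity while $p^*(\gamma)$ drops to or below the target Sobolev exponent.

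For the two clean endpoints I would argue as follows. Taking $\gamma=1$ gives, by \eqref{The regularity of the initial data}, $\eta\in C^\infty[0,\infty)$, hence $f^\pm\in C^\infty(\bar\Delta)$; since $p^*(1)=3$, Proposition \ref{Prop2.1} yields $\sigma\notin W^{1,3}(\Delta)$, the last non-implication. Taking $\gamma=2$ gives $\eta\in C^3[0,\infty)$, hence $f^\pm\in C^1(\bar\Delta)$, while $p^*(2)=2$ gives $\sigma\notin W^{1,2}(\Delta)=H^1(\Delta)$, the second non-implication.

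The two remaining statements require selecting $\gamma$ inside an interval and matching exponents precisely. For the $C^{1,\alpha}$ statement I would set $\gamma=\tfrac{2}{1+\alpha}\in(1,2)$; then $\tfrac{2}{\gamma}-1=\alpha$, so \eqref{The regularity of the initial data} gives $\eta(|\cdot|)\in C^{3,\alpha}$, i.e. $f^\pm\in C^{1,\alpha}$, whereas $p^*(\gamma)=\tfrac{\gamma+2}{\gamma}=2+\alpha$, so $\sigma\notin W^{1,2+\alpha}(\Delta)$. For the $W^{1,p}$ statement, note that $f^\pm\in W^{1,p}$ requires $\eta\in W^{3,p}$, which by the last line of \eqref{The regularity of the initial data} (valid for $\gamma>2$) holds as soon as $p<\tfrac{\gamma}{\gamma-2}$, i.e. $\gamma<\tfrac{2p}{p-1}$. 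Along $\gamma\uparrow\tfrac{2p}{p-1}$ (which is always $>2$) one computes $p^*(\gamma)=\tfrac{\gamma}{\gamma-1}\downarrow\tfrac{2p}{p+1}$, strictly below the target exponent $\tfrac{2p+\varepsilon}{p+1}$. Hence for $\gamma\in(2,\tfrac{2p}{p-1})$ chosen close enough to $\tfrac{2p}{p-1}$ one has simultaneously $f^\pm\in W^{1,p}(\Delta)$ and $p^*(\gamma)\le\tfrac{2p+\varepsilon}{p+1}$, so $\sigma\notin W^{1,\frac{2p+\varepsilon}{p+1}}(\Delta)$, giving the first non-implication.

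The bookkeeping of exponents is routine; the genuinely delicate feature is that in each case the admissible $\gamma$ is pinned down by two opposing constraints. Decreasing $\gamma$ improves the regularity of the data but increases $p^*(\gamma)$, weakening Proposition \ref{Prop2.1}; increasing $\gamma$ lowers $p^*(\gamma)$ but degrades the data. For the $H^1$ and $C^{1,\alpha}$ statements these requirements meet exactly at $\gamma=2$ and $\gamma=\tfrac{2}{1+\alpha}$, where the target exponent coincides with $p^*(\gamma)$; here I must invoke the non-strict inequality $p\ge p^*(\gamma)$ in Proposition \ref{Prop2.1} and verify that at these borderline values \eqref{The regularity of the initial data} still delivers the claimed regularity (e.g. $\tfrac{2}{\gamma}-1=\alpha$ at $\gamma=\tfrac{2}{1+\alpha}$). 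For the $W^{1,p}$ statement the obstruction is the critical Sobolev exponent in $\eta\in W^{3,\frac{\gamma}{\gamma-2}-\varepsilon}$: I can keep $\eta\in W^{3,p}$ only for $\gamma$ strictly below $\tfrac{2p}{p-1}$, so $p^*(\gamma)$ remains strictly above its limit $\tfrac{2p}{p+1}$, and it is precisely the slack $\varepsilon>0$ in the target $\tfrac{2p+\varepsilon}{p+1}$ that lets me absorb this gap. A final routine check is that the one-sided smoothness $\eta\in C^{k}[0,\infty)$ recorded in \eqref{The regularity of the initial data} suffices for $f^\pm$ to have the stated regularity up to the portion $\{x_2=0\}$ of $\partial\Delta$.
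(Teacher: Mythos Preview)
Your proof is correct and follows exactly the same approach as the paper: each non-implication is obtained by choosing $\gamma$ appropriately ($\gamma>2$ close to $\tfrac{2p}{p-1}$, $\gamma=2$, $\gamma=\tfrac{2}{1+\alpha}$, $\gamma=1$) and combining the regularity table \eqref{The regularity of the initial data} for the data with Proposition~\ref{Prop2.1} for the failure of $\sigma$. Your write-up is in fact more explicit than the paper's, spelling out the minimum in $p^*(\gamma)$, the limiting computation $\tfrac{\gamma}{\gamma-1}\to\tfrac{2p}{p+1}$, and the role of the $\varepsilon$-slack and the one-sided regularity on $\{x_2=0\}$, all of which the paper leaves to the reader.
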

\begin{proof}
These statements follow immediately from (\ref{The regularity of the initial data}) and the proposition \ref{Prop2.1}. Indeed, for $\gamma>2$: $\eta^{\prime\prime} \in W^{1,\frac{\gamma}{\gamma-2}-\varepsilon}(\mathbb{R})$ ($\mbox{for all}\,\,\varepsilon>0$) and the transport density $\sigma$ is not in $W^{1,\frac{\gamma}{\gamma-1}}(\Delta)$, so (2.7) follows. To prove (2.8), take $\gamma=2$ and then, in this case, we have that $\eta^{\prime\prime} \in C^{1}(\mathbb{R})$ and $\sigma \notin H^{1}(\Delta)$. For $1<\gamma<2$: $\eta^{\prime\prime} \in C^{1,\frac{2}{\gamma}-1}(\mathbb{R})$ and $\sigma \notin W^{1,\frac{\gamma+2}{\gamma}}(\Delta)$, so (2.9) follows. Finally, for $\gamma=1$: $\eta^{\prime\prime} \in C^{\infty}(\mathbb{R})$ and $\sigma \notin W^{1,3}(\Delta)$ and the statement (2.10) follows.  $\qedhere$
\end{proof} 
 %\begin{corollary}\label{Statements Sobolev}
%The following statements hold\,:
%\begin{eqnarray}
% f^{\pm} \in W^{1,p}(\Delta)\; &\not\Rightarrow &\;\sigma \in W^{1, \frac{2p+\varepsilon}{p+1}}(\Delta),\;\;\forall\;\varepsilon>0,\,p>1, \\
% f^{\pm} \in C^{1}(\bar{\Delta})\; &\not\Rightarrow &\;\sigma \in H^1(\Delta), \\
%f^{\pm} \in C^{1,\alpha}(\Delta)\; &\not\Rightarrow &\;\sigma \in W^{1,2+\alpha}(\Delta),\;\;\forall\;\,\alpha \in (0,1), \\
%f^{\pm} \in C^{\infty}(\bar{\Delta})\; &\not\Rightarrow &\;\sigma \in W^{1,3}(\Delta).
% \end{eqnarray}
 %\end{corollary}
%\begin{proof}
%This follows immediately from (\ref{The regularity of the initial data}) and the proposition \ref{Prop2.1}. $\qedhere$
%\end{proof} 
\begin{proposition}\label{Holderregularity}
The transport density $\sigma$ between $f^+$ and $\,f^-$ is not in $C^{\,0,\frac{1}{\gamma+1}+\varepsilon}(\Delta)$, for all $\,\varepsilon>0$.
 \end{proposition}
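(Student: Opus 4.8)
The plan is to obtain a workable formula for $\sigma$ directly from the prescribed transport-ray geometry and then read off a cusp of $\sigma$ at the corner $(0,0)$, where the rays $l_a$ from \eqref{rays} degenerate as $a\to0$. First I would introduce the parametrisation $(a,x_1)\mapsto P(a,x_1):=\big(x_1,\tfrac{a^\gamma}{2}(x_1+a)\big)$, which sweeps out $\Delta$ by the ray index $a\in[0,1]$ and the horizontal coordinate $x_1\in(-a,1)$. Its Jacobian is $J(a,x_1)=\partial_a x_2=\tfrac{a^{\gamma-1}}{2}\big(\gamma x_1+(\gamma+1)a\big)>0$, and the ray direction has length $|\partial_{x_1}P|=\sqrt{1+a^{2\gamma}/4}$. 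Because \eqref{Existence of a transport map} is exactly the statement that no mass crosses the curves $l_a$, these are the transport rays and the Beckmann flux $w=\sigma\nabla u$ is tangent to them. Applying the divergence theorem to the portion of the infinitesimal strip $\{l_{a'}\}_{a'\in[a,a+\delta a]}$ lying left of the vertical section through $x_1$ (where the lateral boundaries carry no flux and the left end is the ray origin) yields $w_1(P)\,J(a,x_1)=\int_{-a}^{x_1}(f^+-f^-)(P(a,\xi))\,J(a,\xi)\,d\xi$, and since $|\nabla u|=1$ gives $\sigma=|w|=|w_1|\,|\partial_{x_1}P|$, I arrive at
\[
\sigma(P(a,x_1))=\frac{|\partial_{x_1}P|}{J(a,x_1)}\,\Big|\int_{-a}^{x_1}(f^+-f^-)(P(a,\xi))\,J(a,\xi)\,d\xi\Big|.
\]

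Next I would specialise to the vertical axis, i.e. to the points $P(a,0)=\big(0,\tfrac{a^{\gamma+1}}{2}\big)$, which lie in $\Delta$ and tend to $(0,0)$ as $a\to0^+$. On $[-a,0]$ one has from \eqref{data} that $f^+-f^-=-\beta\big(\zeta''(\xi)+\eta''(\tfrac{a^\gamma}{2}(\xi+a))\big)$, with $\zeta''(0)=-2$ and, from the expansion of $\eta''$ near $0$, $\eta''(\tfrac{a^\gamma}{2}(\xi+a))=O(a^{2+2/\gamma})$ uniformly. Using $\int_{-a}^0(\gamma\xi+(\gamma+1)a)\,d\xi=\tfrac{\gamma+2}{2}a^2$, the numerator becomes $2\beta\cdot\tfrac{a^{\gamma-1}}{2}\cdot\tfrac{\gamma+2}{2}a^2\,(1+o(1))=\beta\tfrac{\gamma+2}{2}a^{\gamma+1}(1+o(1))$, while $J(a,0)=\tfrac{\gamma+1}{2}a^{\gamma}$. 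The crucial point is that the factor $a^{\gamma}$ cancels, leaving
\[
\sigma(P(a,0))=\frac{\beta(\gamma+2)}{\gamma+1}\,a\,(1+o(1)),\qquad a\to0^+ .
\]

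Since $|P(a,0)-(0,0)|=\tfrac{a^{\gamma+1}}{2}=:d$, one has $a=(2d)^{1/(\gamma+1)}$, so $\sigma\sim c\,d^{1/(\gamma+1)}$ along the $x_2$-axis with $c>0$; in particular $\sigma(0,0)=0$, the common limiting value along the bottom edge and along the rays. To finish, fix $\alpha=\tfrac1{\gamma+1}+\varepsilon$ with $\varepsilon>0$: the quotient
\[
\frac{\big|\sigma(P(a,0))-\sigma(0,0)\big|}{|P(a,0)-(0,0)|^{\alpha}}\;\asymp\;\frac{a}{\,a^{(\gamma+1)\alpha}\,}=a^{\,-(\gamma+1)\varepsilon}\longrightarrow+\infty \quad (a\to0^+),
\]
which shows $\sigma\notin C^{0,\frac1{\gamma+1}+\varepsilon}(\Delta)$, as claimed.

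The main obstacle is not this final estimate but the first step and the error control feeding the cancellation. On one hand I must justify rigorously that the $l_a$ are genuinely the transport rays (producing the Kantorovich potential $u$ whose gradient is the unit ray direction) and that $\sigma$ admits the flux representation displayed above; on the other hand the delicate part of the computation is that the leading $a^{\gamma+1}$ behaviour of the numerator survives division by $J\sim a^{\gamma}$, so I must check that the higher-order part of $\zeta''$ (of size $O(\xi)=O(a)$ on $[-a,0]$) and the full $\eta''$ contribution are of strictly smaller order than $a^{\gamma+1}$ uniformly in the regime where $a$ and $x_1$ are simultaneously small, ensuring the cancellation of $a^{\gamma}$ is exact rather than an artefact of a leading-order truncation.
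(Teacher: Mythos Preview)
Your argument is correct and follows essentially the same route as the paper. The paper parametrises the rays by $(t,a)$ with $x_1=-a+(1+a)t$, derives the same flux formula for $\sigma$ via the divergence theorem on a thin strip of rays, and then evaluates at $x_\varepsilon=(0,\varepsilon)$ (i.e.\ your $P(a,0)$ with $\varepsilon=\tfrac{a^{\gamma+1}}{2}$), obtaining $\sigma(x_\varepsilon)\approx \varepsilon^{1/(\gamma+1)}$; your computation with the $(a,x_1)$ parametrisation is just a change of variables of this, carried out with explicit constants rather than the paper's $\approx$ notation, and the obstacles you flag (justifying that the $l_a$ are the actual transport rays, and controlling the lower-order $\zeta''$ and $\eta''$ contributions) are exactly the points the paper handles in Section~\ref{Sec.3}.
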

 \begin{corollary}\label{Statements Sobolev}
 We have the following statements:
\begin{eqnarray}
 %f^{\pm} \in W^{1,p}(\Omega)\; &\not\Rightarrow &\;\sigma \in W^{1, \frac{2p+\varepsilon}{p+1}}_{loc}(\Omega),\;\;\forall\;\varepsilon>0,\,p>1, \\
% f^{\pm} \in C^{0,1}(\Omega)\; &\not\Rightarrow &\;\sigma \in H^1_{loc}(\Omega),\\
 %f^{\pm} \in C^{1}(\bar{\Omega})\; &\not\Rightarrow &\;\sigma \in H^1(\Omega), \\
%f^{\pm} \in C^{1,\alpha}(\Omega)\; &\not\Rightarrow &\;\sigma \in W^{1,2+\alpha}_{loc}(\Omega),\;\;\forall\;\,\alpha \in (0,1), \\
%f^{\pm} \in BV(\Omega)\; &\not\Rightarrow & \;\sigma \in BV(\Omega),\\
%f^{\pm} \in W^{1,p}(\Omega) \; &\not\Rightarrow &\;\sigma \in W_{loc}^{1,\frac{2 p+\varepsilon}{p+1}}(\Omega),\;\;\forall\;\varepsilon>0,\,p>1, \\
%f^{\pm} \in C^{1}(\bar{\Omega})\; &\not\Rightarrow &\;\sigma \in H^1(\Omega), \\
%f^{\pm} \in C^{1,\alpha}(\Omega)\; &\not\Rightarrow &\;\sigma \in W^{1,2+\alpha}_{loc}(\Omega),\;\;\forall\;\,\alpha \in (0,1), \\
%f^{\pm} \in C^{1,\alpha}(\Omega)\; &\not\Rightarrow &\;\sigma \in W_{loc}^{1,2+\alpha}(\Omega) \cup C_{loc}^{0,\frac{1+\alpha}{3+\alpha}+\varepsilon}(\Omega),\;\;\forall\;\,\alpha \in (0,1),\,\varepsilon>0, \\ 
%f^{\pm} \in C^{2,1}(\Omega)\; &\not\Rightarrow &\;\sigma \in W_{loc}^{1,3}(\Omega), \\
%f^{\pm} \in C^{\infty}(\bar{\Omega})\; &\not\Rightarrow &\;\sigma \in W^{1,3}(\Omega), \\
%f^{\pm} \in C^{\infty}(\bar{\Omega})\; &\not\Rightarrow & \;\sigma \in W_{loc}^{1,5}(\Omega),\\
%f^{\pm} \in C^{\infty}(\bar{\Omega})\; &\not\Rightarrow &\;\sigma \in W^{1,3}(\Omega),\\
f^{\pm} \in C^{0,\alpha}(\Delta)\; &\not\Rightarrow &\;\sigma \in C^{0,\frac{\alpha}{\alpha+2}+\varepsilon}(\Delta),\;\forall\;\varepsilon>0,\,\alpha  \in (0,1),\\
%f^{\pm} \in C^{0,1}(\Omega)\; &\not\Rightarrow &\;\sigma \in C^{0,\frac{1}{3}+\varepsilon}_{loc}(\Omega),\;\;\forall\;\,\varepsilon>0,\\
f^{\pm} \in C^{1}(\bar{\Delta})\; &\not\Rightarrow &\;\sigma \in C^{0,\frac{1}{3}+\varepsilon}(\Delta),\;\forall\;\varepsilon>0,\\
f^{\pm} \in C^{1,\alpha}(\Delta)\; &\not\Rightarrow &\;\sigma \in C^{0,\frac{1+\alpha}{3+\alpha}+\varepsilon}(\Delta),\;\forall\;\varepsilon>0,\;\alpha \in (0,1) ,\\
%f^{\pm} \in C^{2,1}(\Omega)\; &\not\Rightarrow & \;\sigma \in C_{loc}^{0,\frac{1}{2} + \varepsilon}(\Omega),\;\;\forall\,\varepsilon>0,\\
f^{\pm} \in C^{\infty}(\bar{\Delta})\; &\not\Rightarrow & \;\sigma \in C^{0,\frac{1}{2} + \varepsilon}(\Delta),\;\forall\;\varepsilon>0.
%f^{\pm} \in C^{\infty}(\bar{\Omega})\; &\not\Rightarrow & \;\sigma \in C_{loc}^{0,\frac{2}{3} + \varepsilon}(\Omega),\;\;\forall\,\varepsilon>0.
%\end{eqnarray}
%\begin{eqnarray} \label{Results}
%f^{\pm} \in W^{1,p}(\Omega)\; &\not\Rightarrow &\;\sigma \in W_{loc}^{1,\frac{2 p+\varepsilon}{p+1}}(\Omega),\;\;\forall\;\varepsilon>0,\,p>1, \\
%f^{\pm} \in C^{1,\alpha}(\bar{\Omega})\; &\not\Rightarrow &\;\sigma \in W_{loc}^{1,2+\alpha}(\Omega),\;\;\forall\;\,\alpha \in [0,1], \\
%f^{\pm} \in C^{2,1}(\bar{\Omega})\; &\not\Rightarrow &\;\sigma \in W_{loc}^{1,3}(\Omega), \\
%f^{\pm} \in C^{\infty}(\bar{\Omega})\; &\not\Rightarrow & \;\sigma \in W_{loc}^{1,5}(\Omega).
\end{eqnarray}
\end{corollary}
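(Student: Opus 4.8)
The plan is to obtain each of the four statements as a direct consequence of Proposition \ref{Holderregularity} combined with the regularity list (\ref{The regularity of the initial data}), exactly in the spirit of the Sobolev corollary following Proposition \ref{Prop2.1}. The starting observation is that $f^+\equiv 1$ is smooth and $\zeta$ is a polynomial, so the regularity of the pair $f^\pm$ coincides with the regularity of $f^-$ in the $x_2$-variable, which is governed by $\eta''$. Hence $f^\pm$ enjoys a prescribed $C^{k,\alpha}$ regularity precisely when $\eta''$ does, i.e. when $\eta(|.|)\in C^{k+2,\alpha}(\mathbb{R})$ (a loss of two derivatives relative to $\eta$). For each statement I would therefore select the value of $\gamma$ for which (\ref{The regularity of the initial data}) yields the desired input regularity of $f^\pm$, and then read off from Proposition \ref{Holderregularity} that $\sigma$ fails to be $C^{0,\frac{1}{\gamma+1}+\varepsilon}(\Delta)$, checking that $\frac{1}{\gamma+1}$ equals the target Hölder exponent.

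Concretely, for $(2.11)$ I would take $\gamma=\frac{2}{\alpha}>2$; then $\eta(|.|)\in C^{2,\frac{2}{\gamma}}(\mathbb{R})=C^{2,\alpha}(\mathbb{R})$, so $f^\pm\in C^{0,\alpha}$, while $\frac{1}{\gamma+1}=\frac{\alpha}{\alpha+2}$. For $(2.13)$ I would take $\gamma=\frac{2}{1+\alpha}\in(1,2)$; then $\eta(|.|)\in C^{3,\frac{2}{\gamma}-1}(\mathbb{R})=C^{3,\alpha}(\mathbb{R})$, so $f^\pm\in C^{1,\alpha}$, and $\frac{1}{\gamma+1}=\frac{1+\alpha}{3+\alpha}$. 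For $(2.12)$ I would take $\gamma=2$, so that $\eta\in C^{3}[0,+\infty)$ gives $f^\pm\in C^{1}(\bar{\Delta})$ and $\frac{1}{\gamma+1}=\frac13$. Finally, for $(2.14)$ I would take $\gamma=1$, for which $\eta\in C^{\infty}[0,+\infty)$ yields $f^\pm\in C^{\infty}(\bar{\Delta})$ and $\frac{1}{\gamma+1}=\frac12$; I note that the alternative choice $\gamma=\frac12$ would only produce the weaker failure exponent $\frac23$, so $\gamma=1$ is the right pick here.

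The only point requiring care — and the place where I expect the (mild) difficulty to lie — is the bookkeeping between the two ways in which (\ref{The regularity of the initial data}) records regularity. In the borderline cases $\gamma=1$ and $\gamma=2$ the sharp input regularity is stated one-sided, namely $\eta\in C^{\infty}[0,+\infty)$ and $\eta\in C^{3}[0,+\infty)$, whereas in the fractional cases it is stated for the even extension $\eta(|.|)$ on all of $\mathbb{R}$. Since $\Delta\subset\{x_2\ge 0\}$, it is precisely the one-sided regularity on $[0,+\infty)$ that controls $f^\pm\in C^{k}(\bar{\Delta})$, so these borderline choices are exactly what I need; and for the fractional statements the even-extension regularity restricts without loss to $C^{k,\alpha}$ on the closed half-space. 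The remaining verification is then routine: confirming that each chosen $\gamma$ falls in the interval for which the relevant line of (\ref{The regularity of the initial data}) is valid, and that the two-derivative shift from $\eta(|.|)$ to $\eta''$ has been carried out consistently.
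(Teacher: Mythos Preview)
Your proposal is correct and follows exactly the paper's approach: select $\gamma>2$, $\gamma=2$, $1<\gamma<2$, and $\gamma=1$ respectively, read off the regularity of $\eta''$ from (\ref{The regularity of the initial data}), and apply Proposition~\ref{Holderregularity}. Your arithmetic for the Hölder exponents matches, and your remark on one-sided versus even-extension regularity is a helpful clarification that the paper leaves implicit (the paper's own proof even contains a harmless misprint, writing $\tfrac{1}{\gamma}$ where $\tfrac{1}{\gamma+1}$ is meant).
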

\begin{proof}
These statements follow immediately from (\ref{The regularity of the initial data}) and the proposition \ref{Holderregularity}. Indeed, for $\gamma>2$: $\eta^{\prime\prime} \in C^{0,\frac{2}{\gamma}}(\mathbb{R})$, for $\gamma=2$: $\eta^{\prime\prime} \in C^{1}(\mathbb{R})$, for $1<\gamma<2$: $\eta^{\prime\prime} \in C^{1,\frac{2}{\gamma}-1}(\mathbb{R})$ and, finally, for $\gamma=1$: $\eta^{\prime\prime} \in C^{\infty}(\mathbb{R})$. Yet, in all these cases, the transport density $\sigma \notin C^{0,\frac{1}{\gamma}+\varepsilon}(\Delta)$, for all $\varepsilon>0$.  $\qedhere$
\end{proof} 
% \begin{corollary}\label{Statements holder}
% The following statements hold\,:
% \begin{eqnarray}
 %f^{\pm} \in C^{0,\alpha}(\Delta)\; &\not\Rightarrow &\;\sigma \in C^{0,\frac{\alpha}{\alpha+2}+\varepsilon}(\Delta),\;\;\forall\;\,\varepsilon>0,\,\alpha  \in (0,1),\\
%f^{\pm} \in C^{0,1}(\Omega)\; &\not\Rightarrow &\;\sigma \in C^{0,\frac{1}{3}+\varepsilon}_{loc}(\Omega),\;\;\forall\;\,\varepsilon>0,\\
%f^{\pm} \in C^{1}(\bar{\Delta})\; &\not\Rightarrow &\;\sigma \in C^{0,\frac{1}{3}+\varepsilon}(\Delta),\;\;\forall\;\,\varepsilon>0,\\
%f^{\pm} \in C^{1,\alpha}(\Delta)\; &\not\Rightarrow &\;\sigma \in C^{0,\frac{1+\alpha}{3+\alpha}+\varepsilon}(\Delta),\;\;\forall\;\,\varepsilon>0,\,\alpha \in (0,1),\\
%f^{\pm} \in C^{2,1}(\Omega)\; &\not\Rightarrow & \;\sigma \in C_{loc}^{0,\frac{1}{2} + \varepsilon}(\Omega),\;\;\forall\,\varepsilon>0,\\
%f^{\pm} \in C^{\infty}(\bar{\Delta})\; &\not\Rightarrow & \;\sigma \in C^{0,\frac{1}{2} + \varepsilon}(\Delta),\;\;\forall\,\varepsilon>0.
%f^{\pm} \in C^{\infty}(\bar{\Omega})\; &\not\Rightarrow & \;\sigma \in C_{loc}^{0,\frac{2}{3} + \varepsilon}(\Omega),\;\;\forall\,\varepsilon>0.
 %\end{eqnarray}
 %\end{corollary}
%\begin{proof}
%This follows immediately from (\ref{The regularity of the initial data}) and the proposition \ref{Holderregularity}. $\qedhere$\\
%\end{proof} 
To obtain counter-examples to interior regularity of the transport density, it suffices to reflect the domain across the $x_1$-axis. Let $\Delta^{\prime}$ be the reflection of $\Delta$ with respect to the $x_1$-axis (see Figure \ref{Fig.2}) and set $\Omega:=\Delta \cup \Delta^{\prime}$.
%Set
%$$ \Omega:=\mbox{interior of the triangle with vertices}\;(-1,0),\,(1,-1)\;\,\mbox{and}\;\,(1,1).$$ 
\begin{figure} \label{Fig.2}
\begin{tikzpicture}[line cap=round,line join=round,>=triangle 45,x=0.9cm,y=0.9cm];
\clip(-4.22,-2.93) rectangle (5.14,4.27);
\draw (2,2)-- (2,0);
\draw [color=fftttt] (2,2)-- (-3.18,0);
\draw [color=fftttt] (0,0)-- (2,0);
\draw (0,0)-- (-3.18,0);
\draw [color=fftttt] (-2.66,0)-- (2,1.4);
\draw [color=fftttt] (-1.42,0)-- (2,0.6);
\draw [color=fftttt] (-0.8,0)-- (2,0.22);
\draw (2,-2)-- (-3.18,0);
\draw (2,-2)-- (2,0);
\draw (-2.66,0)-- (2,-1.4);
\draw (-1.42,0)-- (2,-0.6);
\draw (-0.8,0)-- (2,-0.22);
\begin{scriptsize}
%\draw[color=black] (0,0.46) node {$l_a$};
%\draw[color=black] (-1.5,-0.14) node {$-a$};
\draw[color=fftttt] (1,0.7) node {$\Delta$};
\draw[color=black] (1,-0.7) node {$\Delta^{\prime}$};
\end{scriptsize}
\end{tikzpicture}
\caption{ \label{fig2}}
\end{figure}
Extend the functions $f^{\pm}$ to $\Omega$ so that they are symmetric with respect to the $x_1$-axis. Let $T$ be an optimal transport map between $f^\pm$ and let $\sigma$ be the transport density between them, then it is easy to prove that the map $S$, which is equal to $T$ on $\Delta$ and to the reflection of $T$ with respect to the $x_1$-axis on $\Delta^{\prime}$, is an optimal transport map between the extended densities and the transport density between them is equal to $\sigma$ on $\Delta$ and to the reflection of $\sigma$, with respect to the $x_1$-axis, on $\Delta^{\prime}$.
% of the transport density $\sigma$ between $f^\pm$. %Hence, the transport density $\widetilde{\sigma} \notin W_{loc}^{1,p}(\Omega)$, as $\sigma \notin W^{1,p}(\Delta)$.\\
Using this fact and (\ref{The regularity of the initial data}), we get the following statements:
% as we pointed out in Corollaries \ref{Statements Sobolev} \& \ref{Statements holder}, which show that the transport density may have also some problems of regularity in the ``interior" of the domain, even if the initial and final density are globally regular.
%we get the following statements\,:\\
%the transport density $\sigma \notin W^{1,p}_{loc}(\Omega)$, even if $f^\pm \in W^{1,p}(\Omega)$.
%More precisely, we will prove the following statements\,:\\
% \begin{corollary}\label{Statements Sobolev}
% Let $\Omega \subset \mathbb{R}^2 $ be an open convex domain. Then, we have the following statements\,:
%\begin{center}
\begin{eqnarray}
 f^{\pm} \in W^{1,p}(\Omega)\; &\not\Rightarrow &\;\sigma \in W^{1, \frac{2p+\varepsilon}{p+1}}_{loc}(\Omega),\;\forall\;\varepsilon>0,\;p>1, \\
 f^{\pm} \in C^{0,\alpha}(\Omega)\; &\not\Rightarrow &\;\sigma \in C_{loc}^{0,\frac{\alpha}{\alpha+2}+\varepsilon}(\Omega),\;\forall\;\varepsilon>0,\;\alpha  \in (0,1),\\
 f^{\pm} \in C^{0,1}(\Omega)\; &\not\Rightarrow &\;\sigma \in H^1_{loc}(\Omega) \cup C^{0,\frac{1}{3}+\varepsilon}_{loc}(\Omega),\;\forall\;\varepsilon>0,\\
 %f^{\pm} \in C^{1}(\bar{\Omega})\; &\not\Rightarrow &\;\sigma \in H^1(\Omega), \\
f^{\pm} \in C^{1,\alpha}(\Omega)\; &\not\Rightarrow &\;\sigma \in W^{1,2+\alpha}_{loc}(\Omega) \cup C_{loc}^{0,\frac{1+\alpha}{3+\alpha}+\varepsilon}(\Omega),\;\forall\;\varepsilon>0,\;\alpha \in (0,1), \\
%f^{\pm} \in BV(\Omega)\; &\not\Rightarrow & \;\sigma \in BV(\Omega),\\
%f^{\pm} \in W^{1,p}(\Omega) \; &\not\Rightarrow &\;\sigma \in W_{loc}^{1,\frac{2 p+\varepsilon}{p+1}}(\Omega),\;\;\forall\;\varepsilon>0,\,p>1, \\
%f^{\pm} \in C^{1}(\bar{\Omega})\; &\not\Rightarrow &\;\sigma \in H^1(\Omega), \\
%f^{\pm} \in C^{1,\alpha}(\Omega)\; &\not\Rightarrow &\;\sigma \in W^{1,2+\alpha}_{loc}(\Omega),\;\;\forall\;\,\alpha \in (0,1), \\
%f^{\pm} \in C^{1,\alpha}(\Omega)\; &\not\Rightarrow &\;\sigma \in W_{loc}^{1,2+\alpha}(\Omega) \cup C_{loc}^{0,\frac{1+\alpha}{3+\alpha}+\varepsilon}(\Omega),\;\;\forall\;\,\alpha \in (0,1),\,\varepsilon>0, \\ 
f^{\pm} \in C^{2,1}(\Omega)\; &\not\Rightarrow &\;\sigma \in W_{loc}^{1,3}(\Omega) \cup C_{loc}^{0,\frac{1}{2} + \varepsilon}(\Omega),\;\forall\;\varepsilon>0, \\
%f^{\pm} \in C^{\infty}(\bar{\Omega})\; &\not\Rightarrow &\;\sigma \in W^{1,3}(\Omega), \\
f^{\pm} \in C^{\infty}(\bar{\Omega})\; &\not\Rightarrow & \;\sigma \in W_{loc}^{1,5}(\Omega) \cup C_{loc}^{0,\frac{2}{3} + \varepsilon}(\Omega),\;\forall\;\varepsilon>0.
\end{eqnarray}
%\end{center}
%\end{corollary}
%\begin{proof}
%These statements follow immediately from (\ref{The regularity of the initial data}) and the propositions \ref{Prop2.1} \& \ref{Holderregularity}. %$\qedhere$
%\end{proof} 
 \section{Proof} \label{Sec.3}
 In this section, we want to prove Propositions \ref{Prop2.1} \& \ref{Holderregularity}. Firstly, we will compute the transport density $\sigma$ between $f^+$ and $f^-$. To do that, let us observe that the family $\{l_a,\, a \in (0,1)\}$, where $l_a$ is defined as in (\ref{rays}), covers $\,\Delta\,$ so that for every $ x:=(x_1,x_2) \in \Delta$, there exists a unique pair $(t,a) \in (0,1)^2$ such that $x \in l_a$ and $|x-(-a,0)|=t\, l(a)$, where $l(a)$ is the length of $l_a$. In other words, we have
\begin{eqnarray*}
% x&=&(1-t)(-a,0) + t(1,(1+a)\,a^{\gamma}/2) \\
x&=&\left(-a+(1+a)t\,,\,(1+a)\frac{t a^{\gamma}}{2}\right).
\end{eqnarray*}
%Note that,
%$$D_{(t,a)}(x_1,x_2) =\begin{pmatrix}
% 1+a &&&  t-1 \\
% \\ \\
% \frac{(1+a)\,a^{\gamma}}{2} &&& t \,\frac{a^{\gamma-1}}{2}(\gamma\,(1+a) + a) 
% \end{pmatrix}$$
% and  
% $$ J(t,a):=|\det(D_{(t,a)}(x_1,x_2))|=\frac{1+a}{2}\,a^{\gamma-1}
 %(\gamma \,(1+a)\, t  + a).$$
 %\\
 %If $\,\sigma\,$ is the transport density between $f^+$ and $f^-$, then it is well known that there exists a 1-Lipschitz function $u$ (called $\textit{Kantorovich potential}$) such that $(\sigma,u)$ solves a particular PDE system, called Monge-Kantorovich system\,:
%\begin{equation}\label{MKsyst}
%\begin{cases}
%-\nabla\cdot (\sigma \nabla u)=f:=f^+-f^-&\mbox{ in }\Delta,\\
%\sigma \nabla u\cdot n=0 &\mbox{ on }\partial\Delta,\\
%|\nabla u|\leq 1&\mbox{ in }\Delta,\\
%|\nabla u|= 1&\sigma-\mbox{a.e. }\end{cases}
%\end{equation} \newline
%Set, $$ v:=- \sigma \nabla u.$$
Fix $(t,a)\in (0,1)^2\,$ and set, \\
$$\omega_{\varepsilon}:=\bigg\{\bigg(-s+(1+s)\tau,\,(1+s)\frac{\tau s^{\gamma}}{2}\bigg),\,(\tau,s) \in [0,t] \times [a,a+\varepsilon]\bigg\}$$  
\\
where $\varepsilon > 0$ is small enough. Recalling (\ref{MKsyst2}) and integrating $ -\nabla \cdot (\sigma \nabla u ) = f\,$ on $\,\omega_{\varepsilon}$, we get \\ \begin{equation} \label{divergence formula 1}
-\int_{\partial \omega_{\varepsilon}} \sigma \nabla u\cdot n = \int_{\omega_{\varepsilon}} f.
\end{equation}\\
%where $\,\omega_{\varepsilon}:=U([0,t] \times [a,a+\varepsilon])$ ($\varepsilon >0$ is small enough, $t \ll \delta$ and $a \in (0,A)$ are fixed). %is an infinitesimal domain limited by the curve connect ing $(a(t-1)+t,t \varphi (a))$ to $((a+\varepsilon)(t-1)+t,t \varphi (a+\varepsilon))$ and the segments
%$[(-a - \varepsilon,0),((a+\varepsilon)(t-1)+t,t \varphi (a+\varepsilon))],\,[(-a - \varepsilon,0),(-a,0)],\,[(-a,0),(a(t-1)+t,t \varphi (a))],\,[(-a - \varepsilon,0),((a+\varepsilon)(t-1)+t,t \varphi (a+\varepsilon))$ and the  . \\
%Since the segments $l_a$ are the transport rays between $f^+$ and $f^-$, then it is well known that for every $x \in l_a$\,:
Suppose that the family of segments $(l_a)_{a\in(0,1)}$ are, in fact, all the transport rays on which the optimal transport map, between $f^+$ and $f^-$, acts. In this case, we get that for every $x \in l_a$:\\
$$ \nabla u (x)= \frac{(-a,0)-(1,(1+a)\frac{a^{\gamma}}{2})}{|(-a,0)-(1,(1+a)\frac{a^{\gamma}}{2})|}=\frac{-(1,\frac{a^{\gamma}}{2})}{\sqrt{1 + (\frac{a^{\gamma}}{2})^2}},$$ \\ 
which means that $\nabla u (x) \cdot n=0\,$ if $\,n$ is the unit orthogonal vector to $\,l_a$. 
%In addition, as the densities $f^\pm$ are bounded, then, by \cite{???}, we get that $\sigma=0$ on the endpoints of the transport rays.
% Using \eqref{MKsyst2}, 
Hence, (\ref{divergence formula 1}) becomes \\
\begin{equation} \label{divergence formula 2}
-\int_{s_{\varepsilon}} \sigma \nabla u\cdot n =\int_{\omega_{\varepsilon}} f
\end{equation}\\
where $\,s_{\varepsilon}:=\bigg\{(-s+(1+s)t,\,(1+s)\frac{t s^{\gamma}}{2}),\,s \in [a,a+\varepsilon]\bigg\}$. Yet, we have \\
\begin{eqnarray*}
\int_{\omega_{\varepsilon}} f(x_1,x_2) \,\mathrm{d}x_1\, \mathrm{d} x_2 &=& \int_{\omega_{\varepsilon}} -\beta(\zeta^{\prime\prime}(x_1) + \eta^{\prime\prime}(x_2))  \,\mathrm{d}x_1 \,\mathrm{d} x_2 \\ \\
&=& \int_a^{a+\varepsilon}\int_0^t -\beta\bigg(\zeta^{\prime\prime}(-s+(1+s)\tau) + \eta^{\prime\prime}\bigg((1+s)\frac{\tau s^{\gamma}}{2}\bigg)\bigg) \, J(\tau,s) \, \mathrm{d}\tau\, \mathrm{d}s,
\end{eqnarray*} \\
where $\,J:=|\det(D_{(\tau,s)}(x_1,x_2))|$. Yet,  \\ 

$$D_{(t,a)}(x_1,x_2) :=\begin{pmatrix}
 \partial_t x_1 &&  \partial_a x_1 \\
 \\ 
 \partial_t x_2 &&  \partial_a x_2
 \end{pmatrix}=\begin{pmatrix}
 1+a &&  -1+t \\
 \\ 
 \frac{(1+a)\,a^{\gamma}}{2} && (\gamma\,(1+a) + a)\,\frac{t a^{\gamma-1}}{2}
 \end{pmatrix}.$$\\ 
Then, \begin{equation} \label{Jacobien}
J(t,a)=(1+a)(\gamma \,(1+a)\, t  + a)\frac{\,a^{\gamma-1}}{2}.
\end{equation}\\
On the other hand, 
$$-\nabla u \cdot n =\frac{\partial_t x}{|\partial_t x|}\cdot R\,\partial_a x=\frac{J(t,a)}{l(a)}$$\\
where $R:=\begin{pmatrix}
 0 & 1 \\
 -1 & 0
 \end{pmatrix}$ is the rotation matrix. Hence,\\
\begin{eqnarray*}
-\int_{s_{\varepsilon}} \sigma \nabla u.n &=& \int_a^{a+\varepsilon} \sigma\bigg(-s+(1+s)t,\,(1+s)\frac{t s^{\gamma}}{2}\bigg) \frac{J(t,s)}{l(s)}\,\mathrm{d}s. 
\end{eqnarray*}\\ 
%where $\,|\partial_t U(t,\alpha)|:=\sqrt{(\alpha+1)^2 + \varphi(\alpha)^2}$. \\\
By (\ref{divergence formula 2}), we infer that\\
\begin{multline*} 
\lim_{\varepsilon \to 0^+}\, \frac{1}{\varepsilon} \int_a^{a+\varepsilon} \sigma\bigg(-s+(1+s)t,\,(1+s)\frac{t s^{\gamma}}{2}\bigg) \frac{J(t,s)}{l(s)}\,\mathrm{d}s \\ \\
=\lim_{\varepsilon \to 0^+}\, \frac{1}{\varepsilon} \int_a^{a+\varepsilon}\int_0^t -\beta\bigg(\zeta^{\prime\prime}(-s+(1+s)\tau) + \eta^{\prime\prime}\bigg((1+s)\frac{\tau s^{\gamma}}{2}\bigg)\bigg) \, J(\tau,s) \, \mathrm{d}\tau\, \mathrm{d}s.
\end{multline*}
Finally, we get\\
\begin{equation} \label{transport density}
\sigma(x) =\frac{l(a)\int_0^t -\beta(\zeta^{\prime\prime}(-a + (1+a)\tau) + \eta^{\prime\prime}((1+a)\frac{\tau a^{\gamma}}{2})) \, J(\tau,a) \, \mathrm{d}\tau}{J(t,a)}.
\end{equation}\\
%where $F(t,a):=\int_0^t J(s,a)\mathrm{d}s=\frac{t^2}{2}((1+a)\dot{\varphi} (a) - \varphi(a) ) + t\varphi(a)  $. \\
\\
%where $\,l(a):=(1+a)\sqrt{1+\frac{a^{2\gamma}}{4}}$ is the length of the transport ray $l_a$. 
%and recall that $|l_a|:=\frac{(\lambda+a)(4 + a^{2 \gamma})^{\frac{1}{2}}}{2}$. 
%We observe that $\,\sigma \in C^{\infty}(\Delta)$ and 
%it may have some problems along the transport ray $l_0$, where the Jacobian could vanish. In particular, $\sigma$ 
%it will be not regular at the origin. 
%Firstly, let us study the H\"older regularity of the transport density $\sigma$ near $(0,0)$.
Now, we are ready to prove Proposition \ref{Holderregularity}. Indeed, for every $\varepsilon>0$, let $(t_{\varepsilon},a_{\varepsilon})$ be in $(0,1)^2$ such that  $$x_{\varepsilon}:=(0,\varepsilon)=\bigg(-a_{\varepsilon} + (1+a_{\varepsilon})t_{\varepsilon},\,(1 +a_{\varepsilon})\frac{t_{\varepsilon} a_{\varepsilon}^{\gamma}}{2}\bigg).$$\\
%Clearly, we have $\,a_{\varepsilon}=(2\varepsilon)^{\frac{1}{\gamma+1}}\,$ and $\,t_{\varepsilon}=\frac{(2 \varepsilon)^{\frac{1}{\gamma+1}}}{1+(2 \varepsilon)^{\frac{1}{\gamma+1}}}$. Then, 
As $\zeta^{\prime\prime}(0)<0\,$ and $\,\eta^{\prime\prime}(0)=0$, then, from (\ref{transport density}), we can see easily that, close to the origin, we have \\$$ \sigma\, \approx 
 \frac{\int_0^{t}  J(\tau,a) \, \mathrm{d}\tau}{J(t,a)}$$\\
 where $f \approx g\,$ means that there exist $c^\pm>0$ such that $c^- g \leq f \leq c^+ g$. Yet, by (\ref{Jacobien}), one has\\
$$\int_0^t  J(\tau,a) \, \mathrm{d}\tau= \frac{t}{2}\bigg(J(t,a) + (1+a)\frac{a^{\gamma}}{2}\bigg).$$\\ \\
As $\,t_{\varepsilon}=\frac{a_{\varepsilon}}{1+a_{\varepsilon}}$ and $\varepsilon=\frac{a_{\varepsilon}^{\gamma+1}}{2}$, we infer that $\,t_{\varepsilon} \approx \varepsilon^{\frac{1}{\gamma+1}}$. Hence,\\
$$  \sigma(x_\varepsilon)\,\approx\,\varepsilon^{\frac{1}{\gamma + 1}}.$$
\\
This completes the proof of the proposition \ref{Holderregularity}. Next, to prove Proposition \ref{Prop2.1}, we will only look at
 % only look at 
 $\partial_{x_2} \sigma$ close to the origin and to do that, we want to compute, firstly, $\partial_t \sigma$ and $\partial_a \sigma$.
  % From (\ref{Jacobien}), we have \\
 %\begin{equation}\label{1}
 %\partial_t J(t,a)=\frac{\gamma}{2}\,(1+a)^2\,a^{\gamma - 1},
 %\end{equation}
% and 
% \begin{equation}\label{2}
 %\partial_a J(t,a)=\frac{a^{\gamma - 2}}{2}\left(\gamma\,(\gamma -1)\,(1+a)^2\, t + \gamma\,(1+a)\,(1+2 t)\,a +a^2\right).
 %\end{equation} \\
 Differentiating (\ref{transport density}) with respect to $t$ and $a$ respectively, we get\\
 \begin{equation} \label{dt density}
 \partial_t \sigma(x) = l(a) \,f(x) - \frac{\partial_t J(t,a)}{J(t,a)}\,\sigma(x),
 \end{equation}
 and 
 \begin{equation} \label{da density}
 \partial_a \sigma(x) =\frac{\partial_a l(a)}{l(a)}\,\sigma(x) \,-\,  \frac{\partial_a J(t,a)}{J(t,a)}\,\sigma(x) \,+ K_1(t,a)+K_2(t,a) \\\\
 \end{equation}\\
 where,
% $$K(t,a):= \frac{(\lambda+a)(4 + a^{2 \gamma})^{\frac{1}{2}}}{2}(K_1(t,a) \,+\, K_2(t,a)),$$\\
 $$ K_1(t,a):=\frac{l(a)\int_0^t -\beta(\zeta^{\prime\prime}(-a + (1+a)\tau) + \eta^{\prime\prime}((1+a)\frac{\tau a^{\gamma}}{2}))\,\partial_a J(\tau,a) \, \mathrm{d}\tau}{J(t,a)}$$
 and\\
 $$K_2(t,a):=\frac{l(a) \int_0^t -\beta(-(1-\tau)\zeta^{\prime\prime\prime}(-a + (1+a)\tau) + \frac{\tau a^{\gamma - 1}}{2 }(\gamma(1+a) + a)\,\eta^{\prime\prime\prime}((1+a)\frac{\tau a^{\gamma}}{2})) \, J(\tau,a) \, \mathrm{d}\tau}{J(t,a)}.$$ \\  
% \,\bigg(
 %\frac{\int_0^t -\beta(\zeta(s-(1-s)\,a) + \eta^{\prime\prime}(s\,\frac{a^{\gamma}}{2}(1+a))) \, (J(t,a)\,\partial_a J(s,a) - J(s,a)\,\partial_a J(t,a)) \, \mathrm{d}s}{J(t,a)^2}\\ \\
% +\,\;\frac{\int_0^t -\beta(-(1-s)\zeta^{\prime}(s-(1-s)\,a) + (\frac{s\,a^{\gamma - 1}}{2 }(\gamma(1+a) + a))\,\eta^{\prime\prime\prime}(s\,\frac{a^{\gamma}}{2}(1+a))) \, J(s,a) \, \mathrm{d}s}{J(t,a)}\,\bigg)
 %(1 + a)\,\sqrt{1 + \frac{a^{2 \gamma}}{4}}
 %\\ \\
% +\;.
 %\end{multline*} \\ \\
 \\
Now, we claim that, for any $\gamma \geq \frac{1}{2},$ 
\begin{equation} \label{upper/lower bound}
\partial_{a}\sigma \approx \,t \,+\, \bigg(\frac{t}{t+a}\bigg)^2.
\end{equation}
%Differentiating twice $$\eta(s)=s^2 h^2(s^{\frac{1}{\gamma}}),$$
 From Section \ref{sec.2}, we have 
\begin{eqnarray} \label{Holder for tiers eta} 
|\eta^{\prime\prime\prime}(x_2)| \leq C\,x_2^{\frac{2}{\gamma}-1},\,\mbox{for all}\;\,x_2 \in (0,1)
\end{eqnarray}
 %$$\eta^{\prime\prime\prime}\bigg((1+a)\frac{\tau a^{\gamma}}{2}\bigg)=
% 2\,h^2\bigg(\frac{ (1+a)^{\frac{1}{\gamma}} \tau^{\frac{1}{\gamma}}a}{2^{\frac{1}{\gamma}}}\bigg)\,+\,\bigg(\frac{6}{\gamma}\, +\, \frac{2}{\gamma^2}\bigg)\frac{(1+a)^{\frac{1}{\gamma}}\tau^{\frac{1}{\gamma}}a}{2^{\frac{1}{\gamma}}}\,\,h\bigg(\frac{(1+a)^{\frac{1}{\gamma}} \tau^{\frac{1}{\gamma}}a}{2^{\frac{1}{\gamma}}}\bigg)\,h^{\prime}\bigg(\frac{(1+a)^{\frac{1}{\gamma}}\tau^{\frac{1}{\gamma}}a}{2^{\frac{1}{\gamma}}}\bigg)$$\\ 
%$$ + \;\,\frac{2}{\gamma^2}\,\frac{(1+a)^{\frac{2}{\gamma}}\tau^{\frac{2}{\gamma}}a^2}{2^{\frac{2}{\gamma}}}\bigg(\,\bigg(h^{\prime}\bigg(\frac{(1+a)^{\frac{1}{\gamma}}\tau^{\frac{1}{\gamma}}a}{2^{\frac{1}{\gamma}}}\bigg)\bigg)^2 \,+\, \,h\bigg(\frac{(1+a)^{\frac{1}{\gamma}}\tau^{\frac{1}{\gamma}}a}{2^{\frac{1}{\gamma}}}\bigg)\,h^{\prime\prime}\bigg(\frac{(1+a)^{\frac{1}{\gamma}} \tau^{\frac{1}{\gamma}}a}{2^{\frac{1}{\gamma}}}\bigg)\bigg).$$
%\end{eqnarray*}\\
%\\ \\Differentiating this equality with respect to $a$, we get \\
\\
where $C:=C(\gamma)$. Then,
$$\bigg|\frac{\tau a^{\gamma - 1}}{2 }(\gamma(1+a) + a)\,\eta^{\prime\prime\prime}\bigg((1+a)\frac{\tau a^{\gamma}}{2}\bigg)\bigg| \,\leq C\,\tau^{\frac{2}{\gamma}} a.$$\\
%$f \lesssim g\,$ means that there exists $\,c>0\,$ such that $f\leq c g$. 
As $\,\zeta^{\prime\prime\prime}(0)>0$, we infer that
% there exists a small neighborhood $v$ of $(0,0)$ such that for all $(t,a) \in v$, we have\\
%$$ K_2(t,a) \approx \frac{\int_0^t  J(\tau,a) \, \mathrm{d}\tau}{J(t,a)} \approx t. $$ \\
%$$ K_2(t,a) \geq 0. $$ \\
%By (\ref{Jacobien}), $$\int_0^t  J(\tau,a) \, \mathrm{d}\tau= \frac{t}{2}\bigg(J(t,a) + (\lambda+a)\frac{a^{\gamma}}{2}\bigg).$$
%Hence, \\
%$$ K_2(t,a) \approx t.$$ \\More precisely, $
$$ K_2(t,a) \approx t.$$\\
In addition, it is easy to see that \\
%$\partial_a l(a) \geq 0$. Furthermore,\\
%$$l(a)=(1+a)\sqrt{1+\bigg(\frac{a^\gamma}{2}\bigg)^2}.$$
%Then,\\
%$$\partial_a l(a) \geq 0.$$
%$$ \sigma(t,a) \approx \frac{\int_0^t  J(\tau,a) \, \mathrm{d}\tau}{J(t,a)}\approx t$$\\
%and
$$\frac{\partial_a l(a)}{l(a)}\,\sigma(x)=\frac{4+a^{2 \gamma} + \gamma(1 +a)\,a^{2 \gamma - 1}}{(1+a)(4+a^{2 \gamma})}\,\sigma(x) \approx t.$$
%and \\
%$$\frac{\partial_a l(a)}{l(a)}\,\sigma(x) \approx \sigma(x)\approx \frac{\int_0^t  J(\tau,a) \, \mathrm{d}\tau}{J(t,a)} \approx t.$$\\
%$$\frac{\partial_a l(a)}{l(a)}\sigma(x)\geq 0.$$\\
On the other hand, \\
%In addition,\\
$$ K_1(t,a)\,-\,\frac{\partial_a J(t,a)}{J(t,a)}\,\sigma(x) 
 = \frac{l(a)\int_0^t f(-a + (1+a)\tau,(1+a)\frac{\tau a^{\gamma}}{2})\,(J(t,a)\partial_a J(\tau,a)-J(\tau,a)\partial_a J(t,a)) \, \mathrm{d}\tau}{J(t,a)^2}.$$\\ \\
Yet, by (\ref{Jacobien}), we have 
 %\begin{equation}\label{1}
 %\partial_t J(t,a)=\frac{\gamma}{2}\,(1+a)^2\,a^{\gamma - 1},
 %\end{equation}
% and 
 \begin{equation*}\label{2}
 \partial_a J(t,a)=\left(\gamma\,(\gamma -1)\,(1+a)^2\, t + (\gamma\,(1+a)\,(1+2 t) +a)\,a\right)\frac{a^{\gamma - 2}}{2}.
 \end{equation*} \\ Then,
 %But, by (\ref{Jacobien}) \& (\ref{2}),  
%we will give an upper/lower bound for the first term in the expression of $\partial_{a} \sigma$. I
it is not difficult to check that\\ 
$$J(t,a)\,\partial_{a} J(\tau,a) -  J(\tau,a)\,\partial_{a} J(t,a)=\frac{\gamma}{4}(1+a)^2 (t-\tau) a^{2 \gamma - 2}.
$$\\
Using (\ref{Jacobien}) again, we infer that
$$  K_1(t,a)\,-\,\frac{\partial_a J(t,a)}{J(t,a)}\,\sigma(x) \approx \bigg(\frac{t}{t+a}\bigg)^2.$$\\ \\
This completes the proof of (\ref{upper/lower bound}). 
%In addition, we have the following\\
%$$t \lesssim \partial_a \sigma \lesssim 1.$$
% see that 
% and this shows 
%that 
%$\,\partial_a \sigma$ is also bounded from above.
% is nonnegative on $\Delta$, but it could vanish close to $(0,0)$. 
Now, our aim is to prove that

\begin{equation} \label{approx}
\partial_t \sigma \approx 1.
\end{equation} \\
 %On the other hand, as $\,\eta^{\prime\prime}(0)=0$, then, for any $\delta>0$, there exists a small neighborhood $v_{\delta}$ of $\,(0,0)\,$ such that for every $(s,a) \in v_{\delta}$, we have \\
% $$\bigg|-\beta\bigg(\zeta(s-(1-s)\,a) + \eta^{\prime\prime}\bigg(s\,\frac{a^{\gamma}}{2}(1+a)\bigg)\bigg) - 2 \beta\,\bigg|< \delta.$$\\
% Firstly, by (\ref{Jacobien}) \& (\ref{1}), we have 
% \begin{equation} \label{Jacobien times dt Jacobien}
 %\partial_t J (t,a)\,J(\tau,a)=\frac{\gamma}{4}\,(\lambda+a)^3\,(\gamma \,(\lambda+a)\tau  + a)\,a^{2 \gamma-2}.
% \end{equation}\\
Fix $\delta>0$. As $\,\eta^{\prime\prime}(0)=0$, then, near the origin, we can assume that\\
%there exists a neighborhood $v_{\delta}$ of $(0,0)$ such that for every $(t,a) \in v_{\delta}$, we have \\
 $$ |f(x) -2 \beta|< \delta.$$
 From (\ref{dt density}), we get
 \begin{eqnarray*}
 \partial_t \sigma(x) &\geq & l(a)\bigg(2 \beta - \delta - \frac{\int_0^t \partial_t J(t,a) J(\tau,a)\mathrm{d}\tau}{J(t,a)^2}(2 \beta+\delta)\bigg).
 \end{eqnarray*}\\
  Yet,
 \begin{equation*}\label{1}
 \partial_t J(t,a)=\frac{\gamma}{2}\,(1+a)^2\,a^{\gamma - 1}
 \end{equation*}\\
 and then,
 $$ \frac{\int_0^t \partial_t J(t,a)J(\tau,a)\mathrm{d}\tau}{J(t,a)^2}=\frac{1}{2}\bigg(1-\frac{a^2}{(\gamma \,(1+a)\,t  + a)^2}\bigg).$$
Hence,

 $$\partial_t \sigma(x) \geq \;\beta - \frac{3 \delta}{2}>0.$$\\ \\
In the same way, we can prove that $\partial_t\sigma$ is bounded from above and then, (\ref{approx}) follows.
%\begin{equation} \label{approx}
%\partial_t \sigma \approx 1.
%\end{equation} \\
Yet,  \\ 
$$ \partial_{x_2} \sigma:=\partial_t\sigma\,\partial_{x_2} t\, +\, \partial_a\sigma\,\partial_{x_2}a$$
and \\
 $$ D_{(x_1,x_2)}(t,a) :=\begin{pmatrix}
 \partial_{x_1}t  &&  \partial_{x_2} t \\
 \\ 
 \partial_{x_1}a  &&  \partial_{x_2}a
 \end{pmatrix}=
 \frac{1}{J(t,a)}\,\begin{pmatrix}
 (\gamma\,(1+a) + a)\,\frac{t a^{\gamma-1}}{2}  &&  1-t \;\\
 \\ 
 -\frac{(1+a)\,a^{\gamma}}{2} && 1+a \;
 \end{pmatrix}.$$\\
 Hence, by (\ref{upper/lower bound}) \& (\ref{approx}), we get
 $$ \partial_{x_2} \sigma \,\approx\,\frac{1}{J}$$
 and
 \begin{eqnarray*}
 || \partial_{x_2} \sigma ||_{L^p(\Delta)}^p & \approx & \int_{\Delta} \frac{1}{J(t,a)^{p}}\,\mathrm{d}x_1\,\mathrm{d}x_2 \approx  \int_0^{\delta} \int_0^{\delta} \frac{1}{J(t,a)^{p-1}}\,\mathrm{d}t\,\mathrm{d}a
 \end{eqnarray*}\\ 
 where $\delta>0$ small enough. From (\ref{Jacobien}), we get\\
 \begin{eqnarray*}
 || \partial_{x_2} \sigma ||_{L^p(\Delta)}^p & \approx & \int_0^{\delta} \int_0^{\delta} \frac{1}{a^{(\gamma-1)(p-1)}(t+a)^{p-1}} \,\mathrm{d}t\,\mathrm{d}a \\\\  \\
 & \approx & 
     \int_0^{\delta} r\,\mathrm{d}r \int_0^{\frac{\pi}{2}} \frac{1}{r^{\gamma(p-1)} \sin(\theta)^{(\gamma-1)(p-1)}(\cos(\theta)+\sin(\theta))^{(p-1)}} \,\mathrm{d}\theta \\ \\ \\
     & \approx & 
     \int_0^{\delta} \frac{1}{r^{\gamma(p-1)-1}} \,\mathrm{d}r \int_0^{\frac{\pi}{2}} \frac{1}{\sin(\theta)^{(\gamma-1)(p-1)}} \,\mathrm{d}\theta.
    % \int_0^{\delta} \frac{1}{a^{\gamma-1}} \,\mathrm{d}a & \mbox{if}\;\,\,p=2,\\ \\ 
     %\int_0^{\delta} \frac{1}{a^{\gamma\,(p-1) - 1}} \,\mathrm{d}a & \mbox{if}\;\,\,p>2.
\end{eqnarray*}  \\ \\
Then, the proposition \ref{Prop2.1} is proved. As in \cite{Colombo,Wang}, it is not difficult to prove the existence of a Kantorovich potential $u$ to assert that the rays $(l_a)_{a\in (0,1)}$ are, in fact, all the transport rays between $f^+$ and $f^-$. This follows immediately from the fact that the unit vector of any transport ray $l_a$ is 
%\\$$w_a:=\frac{(1,\frac{a^\gamma}{2})}{\sqrt{1 + \frac{a^{2 \gamma}}{4}}}.$$\\
%From the fact that, for every $x\in \Delta$, there exists a unique $a:=a(x) \in (0,1)$ (note that $a \in C^1(\Delta)$) such that $$ x_2=\frac{a^\gamma}{2}(x_1+a),$$ \\
%and by differentiating this equality with respect to the $x_1$ and $x_2$ variables, we get\\
%\begin{equation}\label{partial 1 a}
%\partial_{x_1}a=\frac{-a}{\gamma(x_1+a)+a}
%\end{equation}
%and
%\begin{equation}\label{partial 2 a}
%\partial_{x_2}a=\frac{2}{a^{\gamma-1}(\gamma(x_1+a)+a)}.
%\end{equation}\\
%But,
%$$\nabla \times w_a := \partial_{x_1}\bigg(\frac{\frac{a^\gamma}{2}}{\sqrt{1 + \frac{a^{2 \gamma}}{4}}}\bigg)- \partial_{x_2}\bigg(\frac{1}{\sqrt{1 + \frac{a^{2 \gamma}}{4}}}\bigg)=\frac{\gamma\,a^{\gamma-1}}{2(1 + \frac{a^{2 \gamma}}{4})^{\frac{3}{2}}}\bigg(\partial_{x_1}a + \frac{a^{\gamma}}{2}\,\partial_{x_2}a\bigg).$$\\ \\
%By (\ref{partial 1 a}) and (\ref{partial 2 a}), we infer that $w_a$ is 
an irrotational vector field, which implies that there is a 1-Lipschitz function $u$ such that 
%$$ \nabla u(x) =-w_a.$$
%Note that, 
\begin{equation} \label{u of slope 1}
u(x) - u(y)=|x-y|\;\;\;\;\forall\;x,\,y \in l_a.
\end{equation}
In addition, by \cite{CafFelMcC,wang}, one can show that there is a unique measure preserving map $\,T\,$ from $(f^+,\Delta)$ to $(f^-,\Delta)$ such that $x$ and $T(x)$ lie in a common $l_a$, for all $x \in \Delta$. 
%Yet, for any 1-Lipschitz function $v$ and for every Borel map $S$ such that $S_{\#}f^+=f^-$, we have \\
%$$ \int_{\Delta}v \,\mathrm{d}(f^+-f^-)=\int_{\Delta}(v(x)-v(S(x))) \,\mathrm{d}f^+ \leq \int_{\Delta} |x-S(x)| \mathrm{d}f^+.$$
%Hence, 
%$$ \sup_{v \in Lip_{1}(\Delta)} \,\int_{\Delta}v \,\mathrm{d}(f^+-f^-)\;\leq \min_{S_{\#}f^+=f^-} \int_{\Delta} |x-S(x)|\mathrm{d}f^+.$$\\ \\
%By (\ref{u of slope 1}) and the fact that $x,\,T(x)$ lie in the same ray $l_a$, we get
%$$ \int_{\Delta} u \,\mathrm{d}(f^+-f^-)=\int_{\Delta}(u(x)-u(T(x))) \,\mathrm{d}f^+ = \int_{\Delta} |x-T(x)| \mathrm{d}f^+.$$\\
Now, it is classical to infer that $T$ is an optimal transport map between $f^\pm$ and $u$ is the corresponding Kantorovich potential.
%We note that instead of using the result of \cite{Colombo} for the construction of $f^-$, take $f^-(t,a):=\mu(a)\,\nu(t)$, where $\nu$ is a given smooth function and choose $\mu$ such that 
%$$ \int_{\Delta_a} f^+ = \int_{\Delta_a} f^- \;\;\;\;\mbox{for all}\;a \in [0,1],$$ \\
% where $\Delta_a$ is the subgraph of $\,l_a$ in $\,\Delta$ and,\\
% \begin{equation*} 
% l_a:=\bigg\{(x_1,x_2) \in \mathbb{R}^2:\,x_2=\frac{\varphi(a)}{1+a} \,(x_1 + a),\,x_1 \in (-a,1) \bigg\},\;\;\;\;a \in [0,1]. 
% \end{equation*} \\
% This is equivalent to say that,
 %namely the triangle formed by $(-a,0),\,(1,0)$ and $(1,\varphi(a))$. Hence, it is sufficient to choose $\mu$ such that  \\
%$$ \int_0^1 J(t,a) \mathrm{d}t= \int_0^1 \mu(a)\,\nu (t)\, J(t,a) \mathrm{d}t, $$ \\
%where $\,J(t,a):=t\,((1+a)\, \dot{\varphi}(a) - \varphi(a)) + \varphi(a).$ Then, we get\\ \\
%$$ \Leftrightarrow \; \int_0^1 t((1+a) \dot{\varphi}(a) - \varphi(a)) + \varphi(a) \mathrm{d}t= \mu(a) \int_0^1  \nu (t) (t((1+a) \dot{\varphi}(a) - \varphi(a)) + \varphi(a)) \mathrm{d}t, $$ 
%$$\mu(a)=\frac{\varphi(a) + \frac{1}{2}\,((1+a)\,\dot{\varphi}(a)-\varphi(a))}{(\int_0^1  \nu (t)\,\mathrm{d}t)\, \varphi(a) + (\int_0^1 t\,\nu (t)\,\mathrm{d}t) ((1+a)\,\dot{\varphi}(a)-\varphi(a))}.$$ \\ \\
%If we take $\varphi(a):=a^{\gamma}\;(\mbox{for}\;\,1 \leq \gamma <2)$
%or $ \varphi(a):= a^2|\log (a)|^2,$ then it is easy to check that $\mu$ is Lipschitz continuous. In this case, we are able to prove the following statement $$f^{\pm} \in W^{1,p}(\Omega)\; \not\Rightarrow \;\sigma \in W_{loc}^{1,p}(\Omega),\;\;\forall\;\,p \geq 2.$$
\section{BV counter-example} \label{Sec.4}
In this section, we will prove the statement (1.4). This means that we want to construct two densities $f^\pm \in BV(\Omega)$ such that the transport density $\sigma$ between them is not in $BV(\Omega)$. First of all, we can see easily that for any $\gamma>0$, the densities $f^\pm$, which are constructed in Section \ref{sec.2}, are in $BV(\Omega)$, but it will be also the same for the transport density $\sigma$ between them. Indeed, to get a counter-example to the $W^{1,p}$ regularity of the transport density, for $p \rightarrow 1$, we need a $\gamma \rightarrow \infty$.
%In addition, if we replace the factor $a^\gamma$ by $w(a)$ in (\ref{rays}), where $w$ decreases very fast near $a=0$ (for instance, take $w(a):=e^{-\frac{1}{a}}$), it will be also that the transport density $\sigma$ is still in $BV(\Omega)$.
 Hence, to get a $BV$ counter-example, we could collect an infinity of triangles (constructed as in Section \ref{sec.2}) with a sequence of exponents $\gamma_n\to\infty$ (where $\gamma_n$ is the exponent of the slopes of the transport rays in the $n$-th triangle, see \ref{rays}). Actually, if we play on other parameters, we just need to take $\gamma_n=\gamma>1$. To do that, let us define $\Delta_n$ as follows\,:
$$\Delta_n:= \mbox{triangle with vertices}\; (-l_n,0),\,\bigg(1,-\frac{l_n^{\gamma}}{2}(1+l_n)\bigg) \;\,\mbox{and}\;\,\bigg(1,\frac{l_n^{\gamma}}{2}(1+l_n)\bigg)$$\\
where $\,l_n:=\frac{1}{n}$. Set, $\Delta_1^{\prime}:=\Delta_1$ and, for all $n \geq 2$, define $\Delta_n^{\prime}$ as a suitable roto-translation of $\Delta_n$:

$$\Delta_n^{\prime}:=\{(x_1,x_2) \in \mathbb{R}^2\,:\,(\cos(\theta_{n})(x_1+l_1) \,+\, \sin(\theta_{n}) x_2 - l_{n},-\sin(\theta_{n})(x_1+l_1) + \cos(\theta_{n}) x_2) \in \Delta_n\},$$\\
where  %$$\Delta l_{n}:=l_{1}-l_n$$
% and 
$$\theta_{n}:= \sum_{k=1}^{n-1} \alpha_k + \alpha_{k+1}$$
and
$$\sin(\alpha_k):=\frac{\frac{l_k^\gamma}{2}}{\sqrt{1+\bigg(\frac{l_k^\gamma}{2}\bigg)^2}},\;\,\alpha_k \in \bigg(0,\frac{\pi}{2}\bigg).$$\\
 % is the angle of rotation between the two corresponding axis of $\Delta_n$ and $\Delta_{n+1}$.
 %We want to construct our domain $\Omega$ in such a way that, for every $n \in \mathbb{N}^*$, we glue $\Delta_{n+1}$ on $\Delta_n$ so that the point $(-1,0)$ will be the commun point between all the triangles $\Delta_n$ (see Figure \ref{fig3}).
  Finally, set
$$ \Omega:=\bigcup\limits_{n=1}^{\infty} \Delta_n^{\prime}.$$\\
\begin{figure}
\begin{tikzpicture}[line cap=round,line join=round,>=triangle 45,x=0.6cm,y=0.6cm]
\clip(-7.99,-4.33) rectangle (8.99,8.73);
\draw (4,4)-- (-4,0);
\draw (-4,0)-- (0.94,6.59);
\draw (0.94,6.59)-- (3.32,3.66);
\draw [color=ffqqww] (-3.02,0.79)-- (1.72,5.63);
\draw [color=ffqqww] (-3.02,0.79)-- (2.73,4.38);
\draw (-4,0)-- (-1.37,7.3);
\draw (-1.37,7.3)-- (0.63,6.18);
\draw (-4,0)-- (-2.9,2.03);
\draw [color=ffqqww] (-2.9,2.03)-- (-0.33,6.72);
\draw [color=ffqqww] (-3.28,1.32)-- (-0.76,6.95);
\draw [color=ffqqww] (-3.28,1.32)-- (0.07,6.49);
\draw (-4,0)-- (-1.49,2.03);
\draw [color=ffqqww] (-1.49,2.03)-- (2.22,5.01);
\draw [dash pattern=on 4pt off 4pt] (-1.48,7.01)-- (-2.34,7.24);
\draw [dash pattern=on 4pt off 4pt] (-2.34,7.24)-- (-4,0);
\draw [dash pattern=on 4pt off 4pt] (-2.38,7.06)-- (-2.81,7.12);
\draw [dash pattern=on 4pt off 4pt] (-2.81,7.12)-- (-4,0);
\draw (4,4)-- (4,-4);
\draw (4,-4)-- (-4,0);
\draw [color=ffqqww] (0,0)-- (4,0);
\draw [color=ffqqww] (-1.14,0)-- (4,0.94);
\draw [color=ffqqww] (-2.51,0)-- (4,2.29);
\draw [color=ffqqww] (-1.14,0)-- (4,-0.94);
\draw [color=ffqqww] (-2.51,0)-- (4,-2.29);
\draw (-4,0)-- (0,0);
\begin{scriptsize}
%\fill [color=xdxdff] (-4,0) circle (1.5pt);
\draw[color=black] (-4.6,-0.4) node {$(-1,0)$};
\draw[color=black] (4,6) node {$\Omega$};
\draw[color=black] (3,1.4) node {$\Delta_1^{\prime}$};
\draw[color=black] (1.4,4.8) node {$\Delta_2^{\prime}$};
\draw[color=black] (-2.1,6.4) node {$\Delta_n^{\prime}$};
\end{scriptsize}
\end{tikzpicture}
\caption{ \label{fig3}}
\end{figure}\\
Fix $n \in \mathbb{N}^*$. Then, after a suitable roto-translation of axis, we can assume that $\Delta_n^{\prime}=\Delta_n$. Set,
%for all $(x_1,x_2) \in \Delta_n^{\prime}$,

 $$f^+(x_1,x_2):=1$$
 and
%$$f^-(x_1,x_2):=1\,+\,\beta(\zeta(\cos(\theta_n) x_1 + \,\sin(\theta_n) x_2 + \sum\limits_{k=2}^{n} \Delta l_k) \,+\, \eta^{\prime\prime}(|-\sin(\theta_n) x_1 + \,\cos(\theta_n) x_2|)),\;\;\,\;\;\forall \,\;(x_1,x_2) \in \Delta_n,$$\\  \\
$$f^-(x_1,x_2):=f_n^-(x_1,x_2):=1\,+\,\beta(\zeta^{\prime\prime}(x_1) \,+\, \eta^{\prime\prime}(|x_2|)),\;\;\,\mbox{for all} \,\;(x_1,x_2) \in \Delta_n^{\prime}$$\\
where $\zeta$ and $\eta$ are the same functions which are constructed in the section \ref{sec.2}.  
Let us denote by $\sigma$ the transport density between $f^+$ and $f^-$. Then, the restriction of $\sigma$ to $\Delta_n^{\prime}$ is the transport density $\sigma_n$ between $f_n^+:=1_{\Delta_n^{\prime}}$ and $f_n^-$. Indeed, for all $n \in \mathbb{N}^*$, if $T_n$ is an optimal transport map between $f_n^\pm\,$ and if $\,u_n$ is the corresponding Kantorovich potential such that $u_n(-1,0)=0$, for all $n\in \mathbb{N}^*$, then it is not difficult to check that \\
 $$ T(x):=T_n(x),\;\;\mbox{for a.e}\,\;x \in \Delta_n^{\prime}$$\\
 is an optimal transport map between $f^\pm$
 and the corresponding Kantorovich potential will be\\
 $$ u(x):=u_n(x),\;\;\mbox{for all}\,\;x \in \Delta_n^{\prime}.$$\\
 By (\ref{definition de la density}), we infer that the restriction of $\sigma$ to $\Delta_n^{\prime}$ is $\sigma_n$. Yet, by Section \ref{Sec.3}, we have already shown that\\
  $$ |\nabla \sigma_n| \approx \frac{1}{J_n}$$\\
  where $J_n$ is defined as in (\ref{Jacobien}) on $\Delta_n$. Hence, 
  %it is also easy to see that there exists a small $\varepsilon>0$ such that for all $(t,a) \in (0,\varepsilon)\times(0,l_n)$\\
 \begin{eqnarray*} 
 \sum_{n=1}^{\infty} ||\nabla \sigma_n||_{L^1(\Delta_n^{\prime})} & \approx &  \sum_{n=1}^{\infty} \int_0^{l_n}\int_0^{\delta}|\nabla \sigma_n(t,a)|J_n(t,a)\,\mathrm{d}t\,\mathrm{d}a\\ \\
% & \geq & \sum_{n=1}^{\infty} \int_0^{l_n}\int_0^{\delta}(1-t) \,\partial_t \sigma_n(t,a)\,\mathrm{d}t\,\mathrm{d}a\\ \\
% & \geq & \sum_{n=1}^{\infty} \int_0^{l_n}\int_0^{\varepsilon}(1-\varepsilon)c\,\mathrm{d}t\,\mathrm{d}a\\ \\
 &\approx &  \sum_{n=1}^{\infty} l_n
= \sum_{n=1}^{\infty} \frac 1n=+\infty \end{eqnarray*}\\ 
 where $\delta>0$ small enough. Hence, the transport density $\sigma \notin BV(\Omega)$. On the other hand, we will show that the target mass $f^-$ is in $BV(\mathbb{R}^2)$. Using (\ref{Holder for tiers eta}), it is easy to prove that\\
 % By (\ref{The regularity of the initial data}), we have that $f_n^-$ is Lipschitz on $\Delta_n$ and then,
%\begin{equation*}
%\eta^{\prime\prime}(x_2)=2\,h^2\bigg(x_2^{\frac{1}{\gamma}}\bigg) \,+ \,\bigg(\frac{6}{\gamma}\, +\, \frac{2}{\gamma^2}\bigg)x_2^{\frac{1}{\gamma}}\,\,h \bigg(x_2^{\frac{1}{\gamma}}\bigg)h^{\prime}\bigg(x_2^{\frac{1}{\gamma}}\bigg)+\;\frac{2}{\gamma^2}\,x_2^{\frac{2}{\gamma}}\bigg(\,\bigg(h^{\prime}\bigg(x_2^{\frac{1}{\gamma}}\bigg)\bigg)^2 \,+\, \,h\bigg(x_2^{\frac{1}{\gamma}}\bigg)\,h^{\prime\prime}\bigg(x_2^{\frac{1}{\gamma}}\bigg)\bigg).
%\end{equation*}\\\\
 %To do that, it is sufficient to show that \\
% $$ \sum_{n=1}^{\infty} ||\nabla f_n^-||_{L^1}<+\infty$$
 %and $$\sum_{n=1}^{\infty} \int_{\partial\Delta_{n+1} \cap \partial\Delta_n}|f_{n+1}^-(z)-f_n^-(z)|\mathrm{d}z<+\infty.$$ \\ \\
% From (\ref{multitarget}), we can see easily that 
% $$ ||\partial_{x_1}f_n^-||_{\infty} \leq C,\;\;\forall\;n \in \mathbb{N}^*.$$\\
%Then, it is easy to check that 
 %$$|\eta^{\prime\prime\prime}(x_2)| \leq C(\gamma)\,x_2^{\frac{2}{\gamma} - 1},$$
% then,
 %Consequently, 
%\begin{eqnarray*}
%||\nabla f_n^-||_{L^1} & \leq & C \int_{- l_n}^{1}\int_0^{\frac{l_n^{\gamma}}{2}(x_1+l_n)}\bigg(1+x_2^{\frac{2}{\gamma} - 1}\bigg)\mathrm{d}x_2\,\mathrm{d}x_1\\ \\
%& \leq & C(|\Delta_n|+l_n^2)
%\end{eqnarray*}
%and\\
$$\sum_{n=1}^{\infty} ||\nabla f_n^-||_{L^1(\Delta_n^{\prime})} \leq C \sum_{n=1}^{\infty}(l_n^\gamma+l_n^2)<+\infty.$$\\
\\
In addition, for a fixed $n \in \mathbb{N}^*$ and after a suitable roto-translation of axis so that $\Delta_n^{\prime}=\Delta_n$, we can assume that
$$ f_n^-(x_1,x_2)=1+\beta (\zeta^{\prime\prime}(x_1) +\eta^{\prime\prime}(|x_2|))$$
and

 $$ f_{n+1}^-(x_1,x_2)=1\,+\,\beta(\zeta^{\prime\prime}(\cos(\theta_{n+1})(x_1 + l_n) \,+\, \sin(\theta_{n+1}) x_2 \,- \,l_{n+1}) \,+\, \eta^{\prime\prime}(|-\sin(\theta_{n+1})(x_1 + \,l_n) +\, \cos(\theta_{n+1}) x_2|))$$\\
 where $$\theta_{n+1}:=\alpha_n + \alpha_{n+1} \approx l_{n+1}^{\gamma}.$$\\
 %$$\Delta l_{n+1}:=l_n-l_{n+1} \approx l_{n+1}^2$$\\
% and $\,\theta_{n+1}\,$ is the angle of rotation between the two corresponding axis of $\Delta_n$ and $\Delta_{n+1}$. Namely, we have\\
% Yet, $$\theta_{n+1} \approx l_{n+1}^{2}.$$
% and\\
%But, $$ \int_{\partial\Delta_{n+1} \cap \partial\Delta_n}|f_{n+1}^-(z)-f_n^-(z)|\mathrm{d}z=\int_{-l_n}^{1+\sum_{k=2}^{\infty}\Delta l_k}\bigg|f_{n+1}^-\bigg(x_1,\frac{l_n^{\gamma_n}}{2}(x_1+l_n)\bigg)-f_n^-\bigg(x_1,\frac{l_n^{\gamma_n}}{2}(x_1+l_n)\bigg)\bigg|\bigg(1+\frac{l_n^{2\gamma_n}}{4}\bigg)^{\frac{1}{2}}\mathrm{d}x_1$$\\
Hence, it is not difficult to check that
\\
$$\bigg|f_{n+1}^-\bigg(x_1,\frac{l_n^{\gamma}}{2}(x_1+l_n)\bigg)-f_n^-\bigg(x_1,\frac{l_n^{\gamma}}{2}(x_1+l_n)\bigg)\bigg| \leq  C (l_n^\gamma+l_n^2).$$\\
%$$\,+\,\,\beta \bigg(\,\bigg|\eta_n^{\prime\prime}\bigg(\frac{l_n^{\gamma_n}}{2}(x_1+l_n)\bigg)\bigg| \,+\,\bigg|\eta_{n+1}^{\prime\prime}\bigg(x_1,\frac{l_n^{\gamma_n}}{2}(x_1+l_n)\bigg) \bigg|\,\bigg),$$\\\\
%\end{eqnarray*}
 Finally, we get
%$$\bigg|f_{n+1}^-\gg(x_1,\frac{l_n^{\gamma}}{2}(x_1+l_n)\bigg)-f_n^-\bigg(x_1,\frac{l_n^{\gamma}}{2}(x_1+l_n)\bigg)\bigg| \leq C l_n^2$$\\
%and 
$$ \sum_{n=1}^{\infty}\int_{\partial\Delta_{n} \cap \partial\Delta_{n+1}}|f_{n+1}^-(z)-f_n^-(z)|\mathrm{d}z \leq C \sum_{n=1}^{\infty}(l_n^\gamma+l_n^2)<+\infty.$$
 \\ \\
As $\,f^-$ is bounded and $\,Per(\Omega) \approx \sum\limits_{n} (l_n^\gamma+l_n^2)<+\infty$, we infer that the target mass $f^- \in BV(\mathbb{R}^2)$ and the statement (1.4) follows.
\section{Counter-examples with compactly supported smooth densities on the whole plane} \label{Sec.5}
In this section, we want to show that is also possible to construct the target measure $f^-$ so that it will be regular on $\mathbb{R}^2$. Firstly, let us observe that the function $\zeta$ (see Section \ref{sec.2}) can be replaced by $\psi\zeta$, where $\psi$ is a $C^\infty$ function such that $\psi=1$ on $[-1,1-\varepsilon^\prime]$ and $\psi=0$ on $ [1-\varepsilon,1]$, ($0<\varepsilon<\varepsilon^{\prime}<1$). Let $\chi_1,\,\chi_2$ be two cutoff functions supported on $\Delta \cup R(\Delta)$, where $R$ is the reflection map with respect to the $x_1$-axis, 
such that $\spt(\chi_2) \subset \{\chi_1 =1\}$, 
%(for some $a_0 \in (1-\varepsilon,1)$), 
$\Delta_{a_0} \cap \{x : x_1 \leq 1-\varepsilon\} \subset \{\chi_1=1\}$ (where $a_0 \in (\varepsilon,1)$ is such that $\spt(\chi_2) \subset \Delta_{a_0} \cup R(\Delta_{a_0})$), $\Delta_{\varepsilon} \cap \{x : x_1 \leq 1-\varepsilon\} \subset \{\chi_2=1\}$
%$\psi \chi_2= \psi$ on $\Delta_{\varepsilon}$
%on a neighborhood of $(0,0)$ 
and $\chi_1,\,\chi_2$ are symmetric with respect to the $x_1$-axis. Set,\\
$$f^+:={\chi}_1$$ 
and $$f^-:=\chi_1+\beta(((\psi \zeta)^{\prime\prime}(x_1)+\eta^{\prime\prime}(|x_2|))\chi_2 + \varphi(x_1)c(a(x_1,|x_2|))),$$\\
where $\varphi$ is a non-negative $C^\infty$ function such that $\spt(\varphi) \subset (1-\varepsilon^\prime,1-\varepsilon)$
%, $\spt(\varphi) \cap \Delta_{a_0} \subset \{\chi_1=1\}$, where $a_0 \in (1-\varepsilon,1)$ such that $\spt(\chi_2) \subset \Delta_{a_0}$, 
and $c$ is to be determined in such a way that
\begin{equation*} 
  \int_{\Delta_a} f^+ = \int_{\Delta_a} f^- \;\;\;\;\mbox{for all}\;a \in (0,1),
  \end{equation*}
  which is equivalent to say that 
  
  $$-\int_{\Delta_a}((\psi \zeta)^{\prime\prime}(x_1)+\eta^{\prime\prime}(x_2))\chi_2(x_1,x_2)  = \int_{\Delta_a}  \varphi(x_1)c(a(x_1,x_2)), \;\;\;\;\mbox{for all}\;a \in (0,1).$$\\ \\
 Differentiating this equality with respect to $a$, we get 
  
  $$c(a)=\frac{-\int_{-a}^1 (\gamma(x_1+a)+a)((\psi \zeta)^{\prime\prime}(x_1)+\eta^{\prime\prime}(\frac{a^\gamma}{2}(x_1+a)))\chi_2(x_1,\frac{a^\gamma}{2}(x_1+a))\,\mathrm{d}x_1}{\int_{-a}^1 (\gamma(x_1+a)+a)\varphi(x_1)\,\mathrm{d}x_1}, \;\;\;\;\mbox{for all}\;a \in (0,1).$$\\
 By (\ref{Existence of a transport map}), we have
 
 $$
 - \int_{-a}^1 (\gamma(x_1+a)+a)(\psi \zeta)^{\prime\prime}(x_1)\,\mathrm{d}x_1 =  \int_{-a}^1 (\gamma(x_1+a)+a)\eta^{\prime\prime}\bigg(\frac{a^\gamma}{2}(x_1+a)\bigg)\,\mathrm{d}x_1, \;\;\;\;\mbox{for all}\;a \in (0,1).$$ \\ \\
 Hence, for $a<\varepsilon$, we get
 
 $$c(a)=\frac{\int_{-a}^1 (\gamma(x_1+a)+a)\eta^{\prime\prime}(\frac{a^\gamma}{2}(x_1+a))(1-\chi_2(x_1,\frac{a^\gamma}{2}(x_1+a)))\,\mathrm{d}x_1}{\int_{-a}^1 (\gamma(x_1+a)+a)\varphi(x_1)\,\mathrm{d}x_1}$$
 and \\$$c^\prime(a)=\frac{1}{\int_{-a}^1 (\gamma(x_1+a)+a)\varphi(x_1)\,\mathrm{d}x_1}\bigg((\gamma+1)\int_{-a}^1 \eta^{\prime\prime}\bigg(\frac{a^\gamma}{2}(x_1+a)\bigg)\bigg(1-\chi_2\bigg(x_1,\frac{a^\gamma}{2}(x_1+a)\bigg)\bigg)\,\mathrm{d}x_1$$\\
 $$+\frac{a^{\gamma-1}}{2}\int_{-a}^1 (\gamma(x_1+a)+a)^2\,\eta^{\prime\prime\prime}\bigg(\frac{a^\gamma}{2}(x_1+a)\bigg)\bigg(1-\chi_2\bigg(x_1,\frac{a^\gamma}{2}(x_1+a)\bigg)\bigg)\,\mathrm{d}x_1 - (\gamma+1)\bigg(\int_{-a}^1 \varphi(x_1)\,\mathrm{d}x_1\bigg) c(a)$$\\
 $$-\frac{a^{\gamma-1}}{2}\int_{-a}^1 (\gamma(x_1+a)+a)^2\,\eta^{\prime\prime}\bigg(\frac{a^\gamma}{2}(x_1+a)\bigg)\partial_{x_2}\chi_2\bigg(x_1,\frac{a^\gamma}{2}(x_1+a)\bigg)\,\mathrm{d}x_1\bigg).$$\\  \\
%Yet, $$|\eta^{\prime\prime\prime}(x_2)| \leq C(\gamma)\,x_2^{\frac{2}{\gamma} - 1}.$$
 By (\ref{Holder for tiers eta}), we infer that 
 $$c^\prime(a) \leq C a$$
 and
 $$||\nabla (\varphi\,c(a))||_{L^p(\Delta)}^p \approx \int_{1-\varepsilon^\prime}^{1-\varepsilon}\int_0^{\varepsilon} \bigg(\varphi(x_1)^p\frac{|c^\prime(a)|^p}{J(t,a)^p}+|\nabla\varphi(x_1)|^p|c(a)|^p\bigg)\,\mathrm{d}x_2\,\mathrm{d}x_1 \approx \int_0^\varepsilon \frac{1}{a^{1-(\gamma-(\gamma-2)p)}}\,\mathrm{d}a.$$\\ 
Hence, for $\gamma>2$,
 $$ f^- \in W^{1,\frac{\gamma}{\gamma-2}-\varepsilon}(\mathbb{R}^2),\;\;\mbox{for all}\,\,\varepsilon>0.$$\\
Similarly, we get that for $\gamma=\frac{1}{2}$: $f^- \in C^\infty(\mathbb{R}^2)$, for $\gamma=1$: $f^- \in C^{2,1}(\mathbb{R}^2)$, for $1<\gamma<2$: $f^- \in C^{1,\frac{2}{\gamma}-1}(\mathbb{R}^2)$ and, finally, for $\gamma=2$: $f^- \in C^{0,1}(\mathbb{R}^2)$.\\ \\
%\begin{eqnarray*}
%f^{\pm} \in BV(\Omega)\; &\not\Rightarrow &\;\sigma \in BV(\Omega).
%\end{eqnarray*}
{\bf Acknowledgments:} the author would like to thank Prof. Filippo Santambrogio for interesting discussions and suggestions. The author also acknowledges the support of the ANR project ANR-12-BS01-0014-01 GEOMETRYA.

\end{document}